\DeclareMathAlphabet{\mathcal}{OMS}{cmsy}{m}{n} 
\newtheorem{theorem}{Theorem}[section]
\newtheorem{lemma}[theorem]{Lemma}
\newtheorem{corollary}[theorem]{Corollary}
\newtheorem*{question}{Question}
\theoremstyle{remark}
\newtheorem*{remark}{Remark}
\newtheorem*{remarks}{Remarks}
\newcommand{\set}[2]{\ensuremath{\{ #1 \>|\> #2 \}}}
\def\form{\ensuremath{(\,\cdot\, , \cdot\,)}}
\DeclareMathOperator{\Ker}{Ker}
\DeclareMathOperator{\SO}{SO}
\begin{document}

\title{On the utility of Robinson--Amitsur ultrafilters}
\author{Pasha Zusmanovich}
\address{}
\email{pasha.zusmanovich@gmail.com}
\date{last (cosmetical) revision May 31, 2016}
\thanks{J. Algebra \textbf{388} (2013), 268--286; arXiv:0911.5414} 

\begin{abstract}
An embedding theorem for algebraic systems is presented,
basing on a certain old ultrafilter construction. 
As an application, we outline alternative proofs of some results from the 
theory of PI algebras, and establish some properties of Tarski's monsters.
\end{abstract}

\maketitle

\section*{Introduction}

In 1960s, A. Robinson and S. Amitsur established a number of embedding results in Ring Theory
which proved to be useful in various structural questions. A typical example:
if a prime ring $R$ embeds in a direct product of associative division rings, 
then $R$ embeds in an associative division ring 
(see \cite[Proof of Theorem 15]{amitsur-1} and \cite[Theorem 3]{amitsur-2} for
original papers, and \cite[\S 6]{eklof} for a nice overview).

The proof of these results follows the same scheme:
basing on the initial data -- a ring embedded in a direct product of rings --
a certain sort of ultrafilter, which we call a \textit{Robinson--Amitsur ultrafilter}, is 
constructed. Using this ultrafilter, one passes from the direct product of rings to their
ultraproduct, and appeal to the {\L}o\'s theorem about elementary equivalence of 
an algebraic system and its ultraproduct completes the proof.

In this paper we extend this argument to 
a class of general algebraic systems (Theorem \ref{ultra}), 
and observe similarity with the 
celebrated J\'onsson lemma from the universal algebra.
Coupled with the classical Birkhoff theorem about varieties of algebraic systems, 
this gives a simple yet elegant criterion for an algebra or group not to satisfy a 
nontrivial identity
(Corollaries \ref{no-ident} and \ref{cor2}).
The corresponding group result is not entirely new (see comments after 
Corollary \ref{cor2}), but, we believe,
its proof is, and the links between apriori unrelated concepts, ideas and results
is the main novelty of this paper.

As an application, we outline alternative, ``by abstract nonsense'', proofs of some 
particular cases of well-known results from the theory of PI algebras (\S \ref{pi}), 
of results about algebras having the same identities (\S \ref{same-ident}), and 
establish that Tarski's monsters without identities have infinite (relative) 
girth (\S \ref{tarski}).
Finally, in \S \ref{dual} we formulate and prove a ``dual'' version of
Theorem \ref{ultra}, with embeddings replaced by surjective homomorphisms,
and discuss some its consequences and related questions.

The narrative is occasionally interspersed with questions and speculations.

\section{Algebraic systems}\label{sect-univalg}

In what follows, by an \textit{algebra} or \textit{ring}, we mean an arbitrary, 
not necessarily associative, or Lie, or satisfying any other distinguished identities, 
algebra or ring, unless it is stated otherwise. Algebras are considered over fields.
\textit{Ideal} of an algebra means a two-sided ideal.
By an \textit{(algebraic) system} we mean an algebra in the universal 
algebraic sense, i.e. a set with a number of operations on it of, generally, various 
arity. 

We deal with algebraic systems whose congruences behave ``good enough'',
like ideals in rings, or normal subgroups in groups (or, more generally,
ideals in the so-called $\Omega$-groups introduced by P.J.~Higgins and studied by 
Kurosh and his school (see \cite[Chapter 3, \S 2]{kurosh}). Namely,
suppose the signature $\Omega$ of a class of algebraic systems has a 
$0$-ary operation $e$ (i.e., a distinguished element).
A term $t(x_1, \dots, x_n, y_1, \dots, y_m)$ composed of the operations in $\Omega$ 
is called an \textit{ideal term in $y_1, \dots, y_m$} if $m>0$ and
$$
t(a_1, \dots, a_n, e, \dots, e) = e
$$
for any $a_1, \dots, a_n\in A$ for any system $A$
in the class. A subset $I$ of an algebraic system $A$ is called an \textit{ideal} if
for any ideal term $t(x_1, \dots, x_n, y_1, \dots, y_m)$ in $y_1, \dots, y_m$, 
the element
\begin{equation}\label{term}
t(a_1, \dots, a_n, b_1, \dots, b_m)
\end{equation}
belongs to $I$ for any $a_1, \dots, a_n \in A$ and $b_1, \dots, b_m \in I$.

For every congruence $\theta$ on $A$, its equivalence class 
$\set{a\in A}{a \equiv_\theta e}$ is an ideal of $A$. 
Generally, this relation between ideal and congruences is not one-to-one,
but if it is, the corresponding class of algebraic systems is called 
\textit{ideal-determined}.
In particular, for systems from an ideal-determined class we can speak about 
quotients by ideals instead of quotients by congruences.

This notion, along with its numerous particular cases and variations, was studied by 
Agliano, Chajda, Fichtner, Gr\"atzer, Gumm, S{\l}omi\'nski, Ursini and others 
(see, for example, \cite[Chapter 10]{cel}).

Any algebraic system $A$ from an ideal-determined class has at least two ideals --
a \textit{trivial ideal} $\{e\}$ and the whole $A$. 
The intersection of any two ideals of $A$ is an ideal. 
$A$ is called \textit{finitely subdirectly irreducible}
if intersection of any two its nontrivial ideals is nontrivial.
An \textit{ideal generated by the subset $X$} of $A$ is the minimal ideal of $A$ 
containing $X$, and it coincides with the set of all elements of the form
(\ref{term}), where $t(x_1, \dots, x_n, y_1, \dots, y_m)$ is an ideal term in 
$y_1, \dots, y_m$, and $a_1, \dots, a_n\in A$, $b_1, \dots, b_m\in X$.

Recall the construction of the ultraproduct -- not in the most general form, 
but in the form suitable for our purposes.
Let $\{A_i\}_{i\in \mathfrak I}$ be a set of algebraic systems from an ideal-determined 
class, and $\mathscr F$ a filter on the indexing set $\mathfrak I$. Then
$$
\mathcal I\Big(\prod_{i\in \mathfrak I} A_i, \mathscr F\Big) = 
\set{f \in \prod_{i\in \mathfrak I} A_i}{\set{i\in \mathfrak I}{f(i) = e} \in \mathscr F}
$$
is an ideal of the direct product $\prod_{i\in \mathfrak I} A_i$, and the quotient
by this ideal is called a \textit{filtered product} of the set 
$\{A_i\}_{i\in \mathfrak I}$ with respect to the filter $\mathscr F$, 
and is denoted by $\prod_{\mathscr F} A_i$.
In the particular case where all $A_i$'s are isomorphic to the same algebraic system $A$,
their filtered product is called a \textit{filtered power} of $A$ and is denoted by 
$A^{\mathscr F}$. When $\mathscr F$ is an ultrafilter, we speak about
\textit{ultraproducts} and \textit{ultrapowers}.

An alternative view on ultrafilters and ultraproducts is an analytic one:
ultrafilter is a probability measure taking only two values -- $0$ and $1$, and
such that every subset of the indexing set is measurable. An ultrapower is the 
collection of measurable functions from the indexing set considered up to the 
measure.

We refer to \cite{cel}, \cite{cohn}, \cite{comm}, \cite{kurosh}, or \cite{malcev}
for all necessary basic notions and results related to universal algebra, and to 
\cite{bell-slomson}, \cite{cohn}, \cite{eklof}, or \cite{malcev} again 
for ultrafilters and ultraproducts.

\begin{theorem}[Robinson--Amitsur for algebraic systems]\label{ultra}
Let $\{ B_i \}_{i\in \mathfrak I}$ be a set of algebraic systems from an 
ideal-determined class. 
If a finitely subdirectly irreducible algebraic system $A$ embeds in
the direct product $\prod_{i\in \mathfrak I} B_i$, then there is an ultrafilter 
$\mathscr U$ on the set $\mathfrak I$ such that $A$ embeds in the ultraproduct 
$\prod_{\mathscr U} B_i$.
\end{theorem}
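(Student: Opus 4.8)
The plan is to construct the ultrafilter $\mathscr U$ directly from the embedding data, mimicking the classical Robinson--Amitsur construction but phrased in the language of ideals and ideal terms. Let $\varphi\colon A \hookrightarrow \prod_{i\in\mathbb I} B_i$ be the given embedding, and for each index $i$ let $\pi_i$ denote the projection onto $B_i$. The key object to examine is, for each element $a \in A$ with $a \neq e$, the ``support'' set
\begin{equation*}
S_a = \set{i\in\mathbb I}{\pi_i(\varphi(a)) \neq e}.
\end{equation*}
Since $\varphi$ is an embedding, $\varphi(a)\neq e$ whenever $a\neq e$, so each $S_a$ is nonempty. I want an ultrafilter $\mathscr U$ that contains every $S_a$; intersecting with such sets will guarantee that no nonzero element of $A$ collapses to $e$ in the ultraproduct, which is exactly what injectivity of the induced map $A \to \prod_{\mathscr U} B_i$ requires.

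First I would verify that the collection $\mathscr S = \set{S_a}{a\in A,\, a\neq e}$ generates a proper filter, i.e. has the finite intersection property. This is where the hypothesis of finite subdirect irreducibility does the real work. Given $a_1,\dots,a_n\neq e$, consider the ideals $\langle a_1\rangle,\dots,\langle a_n\rangle$ generated by them in $A$; by finite subdirect irreducibility their intersection is nontrivial, so there is some $c\neq e$ lying in all of them. Using the description of generated ideals via ideal terms recalled in the excerpt, $c$ can be written as $t(\dots, a_k, \dots)$ for an ideal term $t$, for each $k$. The crucial point is that ideal terms behave coordinatewise and send $e$ to $e$: if $\pi_i(\varphi(a_k)) = e$ for some $k$, then applying the ideal term coordinatewise forces $\pi_i(\varphi(c)) = e$ as well, hence $i\notin S_c$. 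Contrapositively, $S_c \subseteq \bigcap_{k} S_{a_k}$, and since $c\neq e$ gives $S_c\neq\emptyset$, the finite intersection is nonempty. I expect this step --- passing from the algebraic ``intersection of generated ideals is nontrivial'' to the combinatorial ``supports have the finite intersection property'' --- to be the main obstacle, and the coordinatewise evaluation of ideal terms is the technical lever that makes it go through.

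Once $\mathscr S$ has the finite intersection property, I extend it by Zorn's lemma (the ultrafilter lemma) to an ultrafilter $\mathscr U$ on $\mathbb I$ containing all the $S_a$. It remains to check that the composite $A \xrightarrow{\varphi} \prod_{i} B_i \twoheadrightarrow \prod_{\mathscr U} B_i$ is injective, where the surjection is the canonical quotient by the ideal $\mathcal I(\prod_i B_i, \mathscr U)$. An element $a\neq e$ maps to $e$ in the ultraproduct precisely when $\set{i}{\pi_i(\varphi(a)) = e}\in\mathscr U$, i.e. when the complement $S_a \notin \mathscr U$; but $S_a\in\mathscr U$ by construction, and since $\mathscr U$ is an ultrafilter it cannot contain both $S_a$ and its complement. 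Hence $a$ does not map to $e$. Because we are in an ideal-determined class, injectivity of this homomorphism --- trivial kernel ideal --- is equivalent to its being an embedding, completing the proof that $A$ embeds in $\prod_{\mathscr U} B_i$.
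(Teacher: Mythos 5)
Your proposal is correct and follows essentially the same route as the paper's own proof: the same family $\mathscr S$ of support sets, the same use of finite subdirect irreducibility combined with coordinatewise evaluation of ideal terms to get the finite intersection property, extension to an ultrafilter, and the same trivial-kernel-ideal argument (relying on ideal-determinedness) for injectivity of the induced map into the ultraproduct. The only cosmetic difference is that you handle $n$ elements at once where the paper verifies the pairwise condition and lets induction do the rest.
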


\begin{proof}
Define 
$$
\mathscr S = \set{\set{i\in \mathfrak I}{f(i)\ne e}}{f\in A, f\ne e} .
$$
Let us verify that intersection of any two elements of $\mathscr S$ 
contains an element of $\mathscr S$. 
Let $S,T\in \mathscr S$, say, $S = \set{i\in \mathfrak I}{f(i)\ne e}$
and $T = \set{i\in \mathfrak I}{g(i)\ne e}$ for some $f, g\in A$ different from $e$.
Since $A$ is finitely subdirectly irreducible, it contains an element $u\ne e$
belonging to the intersection of ideals generated by $\{f\}$ and $\{g\}$. 
Let $i\in \mathfrak I$ such that $f(i) = e$.
Since $u = t(h_1, \dots, h_n, f, \dots, f)$ for some ideal term $t$ and 
$h_1, \dots, h_n\in A$,
\begin{multline*}
u(i) = t(h_1, \dots, h_n, f, \dots, f)(i) = 
t(h_1(i), \dots, h_n(i), f(i), \dots, f(i)) \\ = t(h_1(i), \dots, h_n(i), e, \dots, e)
= e .
\end{multline*}
Coupling this with a similar assertion for $g$, we get that
$$
S \cap T \supset \set{i\in \mathfrak I}{u(i) \ne e} \in \mathscr S.
$$
 
Thus $\mathscr S$ satisfies the finite intersection property and is contained in
some ultrafilter $\mathscr U$ on $\mathfrak I$. 
Factoring the embedding of algebraic systems
$A \hookrightarrow \prod_{i\in \mathfrak I} B_i$ by the ideal 
$\mathcal I(\prod_{i\in \mathfrak I} B_i, \mathscr U)$, 
we get an embedding of algebraic systems
$$
A \Big/ \Big(A \cap \mathcal I\Big(\prod_{i\in \mathfrak I} B_i, \mathscr U\Big)\Big) 
\hookrightarrow 
\Big(\prod_{i\in \mathfrak I} B_i\Big) \Big/ \mathcal I\Big(\prod_{i\in \mathfrak I} B_i, \mathscr U\Big)
= \prod_{\mathscr U} B_i.
$$
Let $f \in A \cap \mathcal I\Big(\prod_{i\in \mathfrak I} B_i, \mathscr U\Big)$. 
Then $\set{i\in \mathfrak I}{f(i) = e} \in \mathscr U$, and, since
$\mathscr U$ is an ultrafilter, 
$\set{i\in \mathfrak I}{f(i) \ne e} \notin \mathscr U$, and hence $\set{i\in \mathfrak I}{f(i) \ne e} \notin \mathscr S$. 
From $f \in A$ and the definition of $\mathscr S$ it follows that $f = e$. 
This shows that 
$A \cap \mathcal I\Big(\prod_{i\in \mathfrak I} B_i, \mathscr U\Big) = \{e\}$.
\end{proof}

The ultraproduct construction used in this proof mimics the old one, 
used by A. Robinson and S. Amitsur in Ring Theory, mentioned in the introduction.

The finite subdirect irreducibility of an algebraic system $A$ is 
equivalent to the following condition: if $A$ embeds in the finite direct product 
of algebraic systems $\prod_{i=1}^n B_i$, then $A$ embeds in one of $B_i$'s.
An infinite analog of this condition is \textit{subdirect irreducibility}, that is,
the condition that intersection of any (possibly infinite) set of nontrivial ideals of 
$A$ is nontrivial (or, equivalently, $A$ possesses a minimal nontrivial ideal
which is called \textit{monolith}). 
Similarly, the latter condition is equivalent to the following:
if $A$ embeds in the (possibly infinite) direct product $\prod_{i\in \mathfrak I} B_i$, 
then $A$ embeds in one of $B_i$'s. Thus, Theorem \ref{ultra} can be considered as,
perhaps, somewhat surprising statement that for ideal-determined classes,
finite subdirect irreducibility implies a sort of a weaker form of subdirect 
irreducibility.

\medskip

An application of Theorem \ref{ultra} to varieties and quasivarieties of algebraic 
systems follows.

If $A$ is an algebraic system, $Var(A)$ and $Qvar(A)$ denote, respectively, a variety 
and a quasivariety generated by $A$. Any quasivariety (and, in particular, any variety) 
possesses free algebraic systems 
(this is formulated explicitly, for example, in \cite[Chapter VI, Proposition 4.5]{cohn}
and is implicit in \cite[Chapter V]{malcev}).

\begin{corollary}\label{cor}
Let $A$ be an algebraic system from an ideal-determined class.
A finitely subdirectly irreducible free system in $Var(A)$ or $Qvar(A)$ embeds in an 
ultrapower of $A$.
\end{corollary}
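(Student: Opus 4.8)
The plan is to embed the given finitely subdirectly irreducible free system $F$ (whether free in $Var(A)$ or in $Qvar(A)$) into a suitable direct power of $A$, and then to invoke Theorem \ref{ultra} with all factors equal to $A$. The two cases can be treated simultaneously, since the only property I will need is that distinct elements of $F$ are separated by homomorphisms into $A$; everything then reduces to the embedding statement that Theorem \ref{ultra} is designed to upgrade.

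First I would consider the canonical homomorphism
$$
\eta \colon F \longrightarrow \prod_{\varphi \in \mathrm{Hom}(F,A)} A = A^{\mathrm{Hom}(F,A)},
\qquad \eta(w) = \bigl(\varphi(w)\bigr)_{\varphi},
$$
and prove that it is injective. Given $w_1 \ne w_2$ in $F$, write $w_1 = p(x_{i_1}, \dots, x_{i_k})$ and $w_2 = q(x_{i_1}, \dots, x_{i_k})$ for terms $p,q$ in finitely many free generators. Since $F$ is free, $w_1 \ne w_2$ means that the identity $p \approx q$ does not hold in the generating class, hence does not hold in $A$. Thus some assignment of $x_{i_1}, \dots, x_{i_k}$ into $A$ gives $p$ and $q$ different values; extending this assignment arbitrarily over the remaining generators and using the universal property of $F$, I obtain a homomorphism $\varphi \colon F \to A$ with $\varphi(w_1) \ne \varphi(w_2)$, so $\eta(w_1) \ne \eta(w_2)$. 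Hence $\eta$ is an embedding of $F$ into the direct power $A^{\mathbb J}$, where $\mathbb J = \mathrm{Hom}(F,A)$.

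At this point I would apply Theorem \ref{ultra} to the family $\{B_j\}_{j\in\mathbb J}$ with every $B_j = A$, taking $F$ in the role of the embedded finitely subdirectly irreducible system. Since $F$ is finitely subdirectly irreducible by hypothesis and embeds in $\prod_{j\in\mathbb J} B_j = A^{\mathbb J}$, the theorem produces an ultrafilter $\mathscr U$ on $\mathbb J$ for which $F$ embeds in $\prod_{\mathscr U} B_j = A^{\mathscr U}$, an ultrapower of $A$; this is exactly the desired conclusion.

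The main obstacle is the separation claim of the second step: that two distinct elements of the free system are genuinely distinguished by a homomorphism into $A$. For $Var(A)$ this is the standard Birkhoff-type fact that the free algebra of a variety satisfies precisely the identities of its generator. For $Qvar(A)$ one must additionally verify that passing from $A$ to the quasivariety it generates introduces no new identities --- equivalently, that $Var(A)$ and $Qvar(A)$ have the same set of identities (identities being preserved by $\mathbf S$, $\mathbf P$, and ultraproducts, while $A$ lies in $Qvar(A)$). Granting this, the free systems, and hence the embeddings, coincide in the two cases, and the rest of the argument (that $\eta$ is a homomorphism, and that the partial assignment extends) is routine.
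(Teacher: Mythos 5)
Your proof is correct, but it takes a genuinely different route from the paper's. The paper handles the $Var(A)$ case by quoting Birkhoff's theorem abstractly ($\mathcal F = B/I$ with $B$ a subalgebra of a direct power of $A$) and then splitting the quotient map $B \to \mathcal F$ via the universal property of the free system, so that $\mathcal F$ embeds back into $B$ and hence into a direct power of $A$; for $Qvar(A)$ it likewise quotes Malcev's characterization of quasivarieties via filtered powers, pulls $\mathcal F$ back to a subalgebra of $A^{\mathbb I}$, and splits again. You instead build the embedding into a direct power by hand, via the evaluation map $\eta\colon F \to A^{\mathrm{Hom}(F,A)}$, whose injectivity is exactly the statement that distinct elements of the free system correspond to non-identities of $A$; this is the construction underlying Birkhoff's theorem, so your argument is more self-contained (no projectivity/splitting trick, no appeal to Malcev's theorem) and treats both cases uniformly, while the paper's version is shorter given the standard references. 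The one place where you are too quick is the reduction of the $Qvar(A)$ case: the inference ``same identities, hence same free systems'' is true but not immediate, since the free system of a quasivariety is defined by a universal property, not by identities. The missing line is supplied by your own map $\eta$: it exhibits the $Var(A)$-free system as a subalgebra of a direct power of $A$, hence as a member of $SP(A) \subseteq Qvar(A)$, and since it has the extension property with respect to every system in $Var(A) \supseteq Qvar(A)$, it is also free in $Qvar(A)$. With that observation inserted, both cases reduce, exactly as in the paper, to a single application of Theorem \ref{ultra} with all factors $B_j = A$.
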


\begin{proof}
Let $\mathcal F$ be a free system in $Var(A)$ which is finitely subdirectly irreducible.
According to the Birkhoff theorem, $\mathcal F = B/I$ for an ideal 
$I$ of an algebra $B$, and $B$ is a subalgebra of a direct power of $A$.
Because of the universal property of $\mathcal F$, the short exact sequence 
$\{e\} \to I \to B \to \mathcal F \to \{e\}$ splits, 
i.e. $\mathcal F$ embeds in $B$, and hence in a direct power of $A$.
Then apply Theorem \ref{ultra}.

Similarly, according to the Birkhoff-like characterization of quasivarieties due to 
Malcev (\cite[Chapter V, \S 11, Theorem 4]{malcev}), 
if $\mathcal F$ is a free system in $Qvar(A)$, then it is a subalgebra of a 
filtered power $A^{\mathscr F} = A^{\mathfrak I}/\mathcal I(A^{\mathfrak I}, \mathscr F)$ of 
$A$. Taking preimage of $\mathcal F$ with respect to the homomorphism 
$A^{\mathfrak I} \to A^{\mathfrak I}/\mathcal I(A^{\mathfrak I}, \mathscr F)$,
we get that $\mathcal F$ is a quotient of a subalgebra in 
$A^{\mathfrak I}$, and the rest of reasoning is the same as above.
\end{proof}

Compare Theorem \ref{ultra} and Corollary \ref{cor} with the celebrated J\'onsson lemma 
in the generalized form due to combined efforts of 
Freese, Hagemann, Herrmann, Hrushovski and McKenzie (see \cite[Theorem 10.1]{comm}): 
if $A$ is a subdirectly irreducible algebraic system from a modular variety 
(i.e., the congruence lattice of any system from the variety is modular)
generated by a set $\{B_i\}_{i\in \mathfrak I}$ of algebraic systems, 
then the quotient of $A$ by the centralizer of its monolith embeds in a homomorphic 
image of a subsystem of an ultraproduct $\prod_{\mathscr U} B_i$.
Note that the congruence (=ideal) lattice of any algebra from an ideal-determined class
is modular (see, for example, \cite[Remark 10.1.16]{cel}).

\begin{corollary}[Criterion for absence of non-trivial identities for algebraic systems]\label{no-ident-universal}
Let $\mathfrak V$ be a variety of algebraic systems from an ideal-determined class,
and suppose that all free systems of $\mathfrak V$ are finitely subdirectly irreducible.
Then for an algebraic system $A \in \mathfrak V$, the following is equivalent:
\begin{enumerate}
\item 
any identity of $A$ is an identity of $\mathfrak V$ (i.e., $A$ does not satisfy 
nontrivial identities within $\mathfrak V$);
\item any free system of $\mathfrak V$ embeds in an ultrapower of $A$;
\item any free system of $\mathfrak V$ embeds in a system elementarily equivalent to $A$.
\end{enumerate}
\end{corollary}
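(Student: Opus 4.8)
The plan is to prove the cycle of implications (i) $\Rightarrow$ (ii) $\Rightarrow$ (iii) $\Rightarrow$ (i), with essentially all the substance residing in the first step and the remaining two reducing to standard model theory. First I would note that (i) is nothing but the structural identity $Var(A) = \mathfrak V$: since $A \in \mathfrak V$ already forces every identity of $\mathfrak V$ to hold in $A$, condition (i) supplies the opposite containment, so $A$ and $\mathfrak V$ have exactly the same identities, and by the Birkhoff theorem the variety generated by $A$ coincides with $\mathfrak V$. Consequently the free systems of $\mathfrak V$ are precisely the free systems of $Var(A)$, and by hypothesis each of them is finitely subdirectly irreducible; Corollary \ref{cor} then applies directly and yields that each such free system embeds into an ultrapower of $A$, which is (ii).

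The implication (ii) $\Rightarrow$ (iii) is immediate, since by the {\L}o\'s theorem every ultrapower of $A$ is elementarily equivalent to $A$; thus an embedding into an ultrapower of $A$ is in particular an embedding into a system elementarily equivalent to $A$. For (iii) $\Rightarrow$ (i) I would use the fact that an identity is a universal first-order sentence. If $A$ satisfies it, then so does every system elementarily equivalent to $A$, and hence, universal sentences passing to substructures, so does every subsystem of such a system. By (iii) each free system of $\mathfrak V$ is one of these subsystems, so every identity of $A$ holds in all free systems of $\mathfrak V$; since an identity holds throughout a variety as soon as it holds in its free systems, it is an identity of $\mathfrak V$, which is (i).

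The main obstacle is the first implication, or rather the correct bookkeeping inside it: one must translate the purely syntactic hypothesis (i) into the equality $Var(A) = \mathfrak V$ and then verify that the finite-subdirect-irreducibility assumption on the free systems is exactly what Corollary \ref{cor} requires as input. Once this is in place the deduction is automatic, and the two remaining implications rest only on the elementary and well-known preservation of universal sentences under elementary equivalence and under passage to subsystems.
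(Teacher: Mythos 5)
Your proposal is correct and follows essentially the same route as the paper: (i) $\Rightarrow$ (ii) by translating (i) into $Var(A) = \mathfrak V$ and invoking Corollary \ref{cor}, (ii) $\Rightarrow$ (iii) by the {\L}o\'s theorem, and (iii) $\Rightarrow$ (i) by the first-order character of identity satisfaction together with its preservation under passage to subsystems. The paper's version is merely terser (it leaves the $Var(A) = \mathfrak V$ bookkeeping and the substructure step implicit), so your write-up is just a more detailed rendering of the same argument.
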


\begin{proof}
(i) $\Rightarrow$ (ii) follows from Corollary \ref{cor}.

(ii) $\Rightarrow$ (iii)
follows from the {\L}o\'s theorem about elementary equivalence of an algebraic system
and its ultrapower.

(iii) $\Rightarrow$ (i) 
An algebraic system elementarily equivalent to $A$ does not satisfy a 
nontrivial identity within $\mathfrak V$. Since the latter is the first-order property, 
$A$ does not satisfy a nontrivial identity either.
\end{proof}

\begin{remarks}\hfill

(1) Of course, the equivalence of conditions (ii) and (iii) also follows from 
the (powerful) Keisler ultrapower theorem (see, for example, \cite[Chapter 7, Corollary 2.7]{bell-slomson}).

(2)
Recall a well-known fact from model theory: an algebraic system $B$ embeds in
an ultrapower of an algebraic system $A$ if and only if 
$Th_\forall(A) \subseteq Th_\forall(B)$,
where $Th_\forall$ denotes the universal theory of a system
(see, for example, \cite[Chapter 9, Lemma 3.8]{bell-slomson}).
This allows to rephrase condition (ii) or (iii) as follows: 
the universal theory of $A$ is contained in 
the universal theory of any free system of $\mathfrak V$.

(3)
As any variety is determined by its free system of countable rank (see, for example,
\cite[Chapter IV, Proposition 3.8]{cohn} or \cite[Chapter VI, \S 13, Theorem 3]{malcev}),
in conditions (ii) and (iii) of the corollary, as well as in (2) above, 
one may replace ``any free system'' by ``the free system of countable rank''.
\end{remarks}

\section{Algebras}\label{sect-alg}

Specializing results of the previous section to the (ideal-determined, obviously)
class of rings, we get, for example, a nonassociative analog of one of the classical 
embedding results in Ring Theory mentioned in the introduction:

\begin{corollary}
If a finitely subdirectly irreducible ring $A$ embeds in a direct product of division 
rings, then $A$ embeds in a division ring.
\end{corollary}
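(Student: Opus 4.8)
The plan is to feed the hypothesis straight into Theorem \ref{ultra} and then identify the resulting ultraproduct. Rings (associative or not) form an ideal-determined class, with the distinguished element $e$ being the additive zero $0$ and ideals being the usual two-sided ideals. Since $A$ is finitely subdirectly irreducible and embeds in the direct product $\prod_{i\in\mathbb I} B_i$ of division rings $B_i$, Theorem \ref{ultra} furnishes an ultrafilter $\mathscr U$ on $\mathbb I$ together with an embedding $A \hookrightarrow \prod_{\mathscr U} B_i$.

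It then remains to check that an ultraproduct of division rings is again a division ring; the embedding above will then realize $A$ inside a division ring. To see this, observe that the class of division rings is elementary, i.e. axiomatizable by first-order sentences in the signature of rings. Indeed, besides the ring axioms one only needs the unique solvability of the linear equations $\forall a\,\forall b\,\big(a \ne e \to \exists!\, x\,(a\cdot x = b)\big)$ and the symmetric statement $\forall a\,\forall b\,\big(a \ne e \to \exists!\, x\,(x\cdot a = b)\big)$, together with nondegeneracy $\exists x\,(x \ne e)$; the uniqueness quantifier $\exists!$ is itself first-order. (In the associative case this reduces to the familiar requirement that every nonzero element have a two-sided inverse.) By the {\L}o\'s theorem, a first-order sentence holds in the ultraproduct $\prod_{\mathscr U} B_i$ precisely when it holds in $B_i$ for a $\mathscr U$-large set of indices; as every $B_i$ satisfies all the division-ring axioms, so does the ultraproduct.

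The argument is essentially a short invocation of the machinery already assembled, so there is little genuine obstacle. The one point demanding care is the opening of the second paragraph: confirming that ``division ring'' really is preserved under ultraproducts, which hinges on its being first-order expressible. For associative division rings this is classical; in the general, not-necessarily-associative setting intended here one must phrase invertibility as the unique solvability displayed above rather than through an explicit inverse operation (which is not part of the signature), but once that is done the closure under ultraproducts is immediate from the {\L}o\'s theorem and the corollary follows.
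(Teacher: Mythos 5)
Your proposal is correct and follows exactly the paper's own route: apply Theorem \ref{ultra} to obtain an embedding into an ultraproduct, then invoke the {\L}o\'s theorem together with the first-order expressibility of the division-ring property. The paper merely asserts this expressibility in one line, whereas you spell out the axiomatization (unique solvability of $a\cdot x = b$ and $x\cdot a = b$ for $a \ne e$), which is a sound and welcome elaboration for the not-necessarily-associative setting, but not a different argument.
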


\begin{proof}
By Theorem \ref{ultra}, $A$ embeds in an ultraproduct of division rings.
As the property to be a division ring is the first-order property, by 
the {\L}o\'s theorem the ultraproduct of division rings is a division ring, 
whence the conclusion.
\end{proof}

When trying to specialize to the class of algebras, we should deal with the 
issue of the base field. For example, Corollary \ref{no-ident-universal} is not applicable 
directly. The problem is this: Corollary \ref{no-ident-universal} is obtained
by combination of the Birkhoff theorem about varieties of algebras 
(or a similar statements), and the {\L}o\'s theorem about elementary equivalence
of an algebra and its ultraproduct. One may treat algebras either as two-sorted theories
(algebra over a field, field), or distinguish elements of the base field by an unary 
predicate. Either way, while
in Birkhoff-like reasonings the base field remains the same, in {\L}o\'s ones it, 
generally, changes: we pass to an ultraproduct of the base field.

So, for now on, fix the base field $K$. 
The embedding claimed in Theorem \ref{ultra} is an embedding of algebras
defined over $K$. 
Of course, the ultraproduct $\prod_{\mathscr U} B_i$ is defined also over the
ultrapower field $K^{\mathscr U}$,
so we have an embedding of $K^{\mathscr U} A$ in $\prod_{\mathscr U} B_i$ as 
$K^{\mathscr U}$-algebras. 
(Here $K^{\mathscr U} A$ is understood as a $K^{\mathscr U}$-linear span of 
$A$, considered as a $K$-subalgebra of $\prod_{\mathscr U} B_i$).
Due to the universal property of the tensor product, there is a surjection of 
$K^{\mathscr U}$-algebras
\begin{equation}\label{surj}
A \otimes_K K^{\mathscr U} \to K^{\mathscr U} A ,
\end{equation}
but this surjection is, generally, not a bijection.

An important observation is that for free algebras $A$ in the major 
varieties of algebras considered in the literature -- the varieties
of all algebras, associative algebras, and Lie algebras, the surjection (\ref{surj}) is 
a bijection.

\begin{lemma}\label{embed}
Let $A$ be a subalgebra of an algebra $B$, both defined over a field $K$.
Suppose $B$ is also defined over a field $F$ containing $K$, and that
as a $K$-algebra, $A$ does not have commutative subspaces (i.e., subspaces all whose
elements commute) of dimension $>1$. 
Then $FA \simeq A \otimes_K F$ as $F$-algebras.
\end{lemma}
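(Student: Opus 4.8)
The plan is to show that the canonical surjection analogous to (\ref{surj}), namely the $F$-algebra homomorphism $\phi\colon A \otimes_K F \to FA \subseteq B$ determined by $\phi(a \otimes f) = fa$, is in fact injective; together with its evident surjectivity (its image is the $F$-span of $A$, which is exactly $FA$) this yields the asserted isomorphism. That $FA$ is an $F$-subalgebra and $\phi$ an $F$-algebra homomorphism is immediate from $F$-bilinearity of the product in $B$ together with $a_i b_j \in A$. To analyse the kernel I would fix a $K$-basis of $F$ and note that every element of $A \otimes_K F$ has a unique expression $\sum_{i=1}^n a_i \otimes e_i$ with the $e_i$ linearly independent over $K$; such an element lies in $\ker\phi$ precisely when $\sum_{i=1}^n e_i a_i = 0$ in $B$. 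Thus it suffices to prove that there is no nontrivial relation $\sum_{i=1}^n e_i a_i = 0$ with $e_1, \dots, e_n \in F$ linearly independent over $K$ and $a_1, \dots, a_n \in A$ not all zero.

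Suppose such a relation exists, and choose one for which the number $n$ of nonzero terms is minimal. Multiplying by $e_1^{-1}$ (legitimate, since the nonzero scalar $e_1 \in F$ is invertible) I may assume $e_1 = 1$; note $n \ge 2$, since $n = 1$ would force $a_1 = 0$. Here is the key manipulation: for any $j$, multiplying the relation on the left and on the right by $a_j \in A$ and subtracting gives, by $F$-bilinearity of the product, $\sum_{i=1}^n e_i [a_j, a_i] = 0$, where $[x,y] = xy - yx$ and each $[a_j, a_i] \in A$. The $j$-th summand vanishes, so this is a relation with at most $n - 1$ nonzero terms whose coefficients $\set{e_i}{i \ne j}$ remain linearly independent over $K$; by minimality it must be trivial, i.e. $[a_j, a_i] = 0$. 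Letting $j$ range, the $K$-subspace $U$ of $A$ spanned by $a_1, \dots, a_n$ is a commutative subspace.

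It then remains to bound $\dim_K U$ from below. If $\dim_K U = 1$, then $a_i = \lambda_i a_1$ with $\lambda_i \in K$ and $\lambda_1 = 1$, so the relation becomes $\big(\sum_i \lambda_i e_i\big) a_1 = 0$; but $\sum_i \lambda_i e_i$ is a nonzero element of $F$ (a nontrivial $K$-combination of the linearly independent $e_i$), hence invertible, forcing $a_1 = 0$---a contradiction. Therefore $\dim_K U \ge 2$, contradicting the hypothesis that $A$ has no commutative subspaces of dimension $>1$. Consequently $\ker\phi = 0$ and $\phi$ is the desired isomorphism.

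The arguments above use nothing beyond $F$-bilinearity of multiplication and invertibility of nonzero scalars, so they apply verbatim to nonassociative $A$. The part I expect to require the most care is the bookkeeping in the minimality step: one must verify that after forming the commutator relation the $j$-th term genuinely drops out and that the surviving coefficients stay $K$-linearly independent, so that minimality really applies. A second, separate point is the exclusion of the one-dimensional case, where the hypothesis on commutative subspaces plays no role and one relies instead on the invertibility of $\sum_i \lambda_i e_i$ in $F$.
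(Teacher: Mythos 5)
Your proof is correct, and it is essentially the paper's own argument: both reduce the lemma to showing that an $F$-linear relation among elements of $A$ must degenerate, and both shorten such a relation by taking commutators (induction on $n$ in the paper, a minimal counterexample for you), invoking the absence of commutative subspaces of dimension $>1$ at the decisive step. The remaining differences are bookkeeping: the paper commutes with $a_n$ alone and concludes $K$-linear dependence of the $a_i$, whereas you first normalize the coefficients to be $K$-linearly independent, conclude that all commutators $[a_j,a_i]$ vanish so the whole span is commutative, and dispose of the one-dimensional case by invertibility of nonzero elements of $F$.
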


\begin{proof}
The claimed isomorphism follows from the fact that the linear dependence of elements of $A$ over $F$
implies their linear dependence over $K$. Indeed, consider the linear dependence
\begin{equation}\label{lindep}
f_1a_1 + \dots + f_na_n = 0,
\end{equation}
where $f_i\in F$, $a_i\in A$, and let us prove by induction on $n$
that $a_1, \dots, a_n$ are linearly dependent over $K$. For $n=1$ this is trivial.
Taking the commutator with $a_n$ of both sides of (\ref{lindep}), we get 
$$
f_1[a_1,a_n] + \dots + f_{n-1}[a_{n-1},a_n] = 0 .
$$
(we use the usual notation for the commutator: $[a,b] = ab - ba$).
By inductive hypothesis, there are $k_1, \dots, k_{n-1} \in K$, not all zero, 
such that
$$
k_1[a_1,a_n] + \dots + k_{n-1}[a_{n-1},a_n] = [k_1a_1 + \dots + k_{n-1}a_{n-1},a_n] = 0 .
$$
But since $A$ does not have commutative subspaces of dimension $>1$, 
$$
k(k_1a_1 + \dots + k_{n-1}a_{n-1}) + \ell a_n = 0
$$ 
for some $k, \ell \in K$, not both zero.
\end{proof}

Obviously, the hypothesis of this lemma is satisfied when $A$ is a free algebra or 
a free Lie algebra of rank $>1$ (in view of the Shirshov--Witt theorem). A similar,
but just a little bit more involved argument (see, for example, proof of Lemma 1
in \cite{makar-limanov}) shows that the conclusion of the lemma holds also when 
$A$ is a free associative algebra of rank $>1$.

\medskip

Another issue we should deal with when trying to apply results of the previous section
to the class of algebras, is when the hypothesis of Corollary \ref{cor} holds, i.e., when
free algebras in a variety are finitely subdirectly irreducible.
Again, this hypothesis is satisfied for absolutely free algebras, 
free associative algebras, and free Lie algebras of rank $>1$
(on the other hand, it is not satisfied for free Jordan algebras and 
free alternative algebras). 
In fact, those free algebras satisfy a stronger condition -- primeness, and, in the 
context of algebras, we will mainly focus on the latter condition instead of finite 
subdirect irreducibility.

Let $\mathfrak V$ be a variety of algebras, and $\mathcal F(X)$ is the free 
algebra in this variety generated by a set $X$.
By \textit{words} we mean elements of $\mathcal F(X)$ for some $X$.
The standard grading on $\mathcal F(X)$ is defined by length of words.
By \textit{2-nontrivial words} we mean words degrees of all whose homogeneous 
components in each of the first two indeterminates are non zero. For example,
$$
xy, \quad (xy)x - (xy)(xz), \quad (zy)x + 2(tx)y + x(yz)(tx)
$$
are 2-nontrivial words in $x,y,z,t$ (in that order), while
$$
x, \quad xz, \quad (xy)x + xz, \quad (yz)(yt) + (xy)z
$$
are not.

\begin{lemma}\label{lemma}
For an algebra $A \in \mathfrak V$, the following is equivalent:
\begin{enumerate}
\item For any two nonzero ideals $I, J$ of $A$, $IJ \ne 0$.
\item For any two nonzero elements $x, y \in A$, there is a 2-nontrivial word 
$w(\xi_1, \dots, \xi_n)$, $n \ge 2$ and elements $x_1, \dots, x_{n-2} \in A$ 
such that $w(x, y, x_1, \dots, x_{n-2}) \ne 0$.
\end{enumerate}
\end{lemma}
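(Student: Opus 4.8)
The plan is to translate both conditions into the combinatorics of monomials in the (multigraded) free algebra of $\mathfrak V$, using the explicit description of the principal ideal $\langle z \rangle$ generated by a single element $z$. By the description of ideals recalled in Section~\ref{sect-univalg}, together with the observation that for algebras an ideal term in $y_1,\dots,y_m$ is precisely a word each of whose monomials involves some $y_j$, the ideal $\langle z\rangle$ is exactly the linear span of the values $m(z,c_1,\dots,c_k)$ of monomials $m$ in which the indeterminate set to $z$ occurs at least once, with $c_i\in A$. The relevant bookkeeping is the multidegree in the first two indeterminates: a word is $2$-nontrivial exactly when every monomial occurring in it has positive degree in both $\xi_1$ and $\xi_2$.

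For (i)$\Rightarrow$(ii) I would argue directly. Given nonzero $x,y$, apply (i) to the principal ideals $\langle x\rangle$ and $\langle y\rangle$ to obtain $p\in\langle x\rangle$ and $q\in\langle y\rangle$ with $pq\ne 0$. Writing $p=P(x,\vec a)$ and $q=Q(y,\vec b)$, where $P$ and $Q$ are words all of whose monomials contain the first indeterminate, the product word $w(\xi_1,\xi_2,\vec\eta,\vec\zeta)=P(\xi_1,\vec\eta)\,Q(\xi_2,\vec\zeta)$ has each monomial equal to a product of a monomial containing $\xi_1$ and one containing $\xi_2$, hence containing both. Thus $w$ is $2$-nontrivial and $w(x,y,\vec a,\vec b)=pq\ne 0$, which is exactly (ii).

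For (ii)$\Rightarrow$(i) I would argue by contraposition, first reducing to a \emph{two-sided} annihilation. Suppose nonzero ideals $I,J$ satisfy $IJ=0$. Since $JI\subseteq I\cap J$ (because $ji\in AI\subseteq I$ and $ji\in JA\subseteq J$ for two-sided ideals), and likewise $IJ\subseteq I\cap J$, I split into cases: if $I\cap J=0$, pick nonzero $x\in I$, $y\in J$, so that $\langle x\rangle\langle y\rangle\subseteq IJ=0$ and $\langle y\rangle\langle x\rangle\subseteq JI\subseteq I\cap J=0$; if $I\cap J\ne 0$, then $(I\cap J)^2\subseteq IJ=0$, so choosing $x=y$ nonzero in $I\cap J$ yields $\langle x\rangle\langle y\rangle=\langle y\rangle\langle x\rangle=\langle x\rangle^2=0$. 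In either case I obtain nonzero $x,y$ with $\langle x\rangle\langle y\rangle=\langle y\rangle\langle x\rangle=0$. It then remains to show that every $2$-nontrivial word vanishes at $(x,y,\dots)$, contradicting (ii). As such a word is a linear combination of $2$-nontrivial monomials, it suffices to treat a single monomial value $m(x,y,\vec c)$, which I would kill by induction on the length of $m$ via its root bracketing $m=m_1m_2$: if $m_1$ or $m_2$ already contains both $x$ and $y$ it is a shorter $2$-nontrivial monomial and vanishes by induction; otherwise the occurrences are separated, say all $x$'s in $m_1$ and all $y$'s in $m_2$ (or the reverse), whence $m_1\in\langle x\rangle$, $m_2\in\langle y\rangle$ and $m=m_1m_2\in\langle x\rangle\langle y\rangle=0$ (resp.\ $\in\langle y\rangle\langle x\rangle=0$).

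The main obstacle is precisely this order-sensitivity. Because the algebra is neither associative nor commutative, a single monomial may place $y$ to the left of $x$, landing in $\langle y\rangle\langle x\rangle$ rather than $\langle x\rangle\langle y\rangle$, so the hypothesis $IJ=0$ alone is insufficient to annihilate all $2$-nontrivial monomials. The crux of the argument is therefore the reduction above to a genuinely two-sided annihilation $\langle x\rangle\langle y\rangle=\langle y\rangle\langle x\rangle=0$, obtained via the square-zero ideal $I\cap J$ when the intersection is nonzero and via disjointness ($JI\subseteq I\cap J=0$) otherwise.
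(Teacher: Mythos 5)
Your proof is correct, but your (ii)$\Rightarrow$(i) direction takes a genuinely different route from the paper's. For (i)$\Rightarrow$(ii) the two arguments coincide up to contraposition: the paper assumes every 2-nontrivial word vanishes at $(x,y,\dots)$ and observes that then $\langle x\rangle\langle y\rangle=0$; you run the same observation forwards. For (ii)$\Rightarrow$(i) the paper argues directly: given nonzero ideals $I,J$, it applies (ii) once to $x\in I$, $y\in J$ to produce a nonzero element $z=w(x,y,\dots)\in I\cap J$, then applies (ii) a \emph{second} time to the pair $(z,z)$; splitting a non-vanishing monomial of the resulting word at the node where a submonomial containing the first variable is multiplied by one containing the second yields a nonzero product of two elements of $I\cap J$, which lies in $(I\cap J)(I\cap J)\subseteq IJ$. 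You instead argue by contraposition: from $IJ=0$ you manufacture a pair with two-sided annihilation $\langle x\rangle\langle y\rangle=\langle y\rangle\langle x\rangle=0$ (via the case split on $I\cap J$, using $JI\subseteq I\cap J$ and $(I\cap J)^2\subseteq IJ$), and then kill every 2-nontrivial monomial by induction on its root decomposition. Both proofs confront the same obstacle --- a monomial may place $y$ to the left of $x$ --- and both resolve it by passing through $I\cap J$: the paper by plugging the same element $z$ into both distinguished slots, so that order becomes irrelevant; you by securing annihilation in both orders; and both rest on the same tree-splitting fact about nonassociative monomials. The paper's version is shorter and needs no case analysis, since the substitution $(z,z)$ symmetrizes everything at one stroke; yours makes the order-sensitivity obstruction explicit and isolates exactly what must be added to $IJ=0$ (control of $JI$, or of $(I\cap J)^2$) for the naive contrapositive to go through.
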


\begin{proof}
(i) $\Rightarrow$ (ii). Suppose there are nonzero $x, y \in A$ such that
for any 2-nontrivial word $w(\xi_1, \dots, \xi_n)$ and any $x_1, \dots, x_{n-2} \in A$, 
$w(x, y, x_1, \dots, x_{n-2}) = 0$.
Let $I$ and $J$ be ideals of $A$ generated by $x$ and $y$ respectively. Clearly $IJ = 0$, a
contradiction.

(ii) $\Rightarrow$ (i). Suppose $I, J$ are two nonzero ideals of $A$.
Taking $x \in I$ and $y \in J$, we have $w(x, y, x_1, \dots, x_{n-2}) \ne 0$ for some
2-nontrivial word $w$ and elements $x_i$ of $A$. Clearly, \newline
$w(x, y, x_1, \dots, x_{n-2}) \in I \cap J$.

Further, there is a $2$-nontrivial word $u$ and elements $y_1, \dots, y_{m-2} \in A$ 
such that
$$
u(w(x, y, x_1, \dots, x_{n-2}), w(x, y, x_1, \dots, x_{n-2}), y_1, \dots, y_{m-2}) \ne 
0 .
$$
In particular, for some monomial $m$ occurring in $u$, we have 
\begin{equation}\label{m}
m(w(x, y, x_1, \dots, x_{n-2}), w(x, y, x_1, \dots, x_{n-2}), y_1, \dots, y_{m-2}) \ne 
0 .
\end{equation}
Examining how $m$ is built up from the variables, there must be some point where
a sub-monomial containing the first variable is multiplied by a sub-monomial
containing the second variable, giving a product $pq$ where $p$ involves one of the
first two variables, and $q$ the other.

Due to (\ref{m}),
\begin{multline*}
p(w(x, y, x_1, \dots, x_{n-2}), w(x, y, x_1, \dots, x_{n-2}), y_1, \dots, y_{m-2})
\\ \times
q(w(x, y, x_1, \dots, x_{n-2}), w(x, y, x_1, \dots, x_{n-2}), y_1, \dots, y_{m-2})
\ne 0 .
\end{multline*}
Each of these factors belongs to $I \cap J$ because $w(x, y, x_1, \dots, x_{n-2})$ does,
so their product belongs to $(I \cap J)(I \cap J) \subseteq IJ$, as required.
\end{proof}

An algebra $A \in \mathfrak V$ satisfying the equivalent conditions of Lemma \ref{lemma},
is called \textit{$\mathfrak V$-prime}.
When $\mathfrak V$ is a variety of all associative algebras, this notion coincides
with the classical notion of a prime associative algebra.

Clearly, if $\mathfrak W$ is another variety and $\mathfrak V \subseteq \mathfrak W$, 
an algebra $A \in \mathfrak V$ is $\mathfrak V$-prime 
if and only if it is $\mathfrak W$-prime,
so we can simply speak about \textit{prime algebras} 
(which are prime in the variety of all algebras).

As for any two ideals $I$ and $J$, $IJ \subseteq I \cap J$, prime algebras (or rings) 
are finitely subdirectly irreducible.

Now, the claim about primeness of free algebras and free associative 
algebras is obvious, as they do not have zero divisors. 
To establish primeness of a free Lie algebra of rank $>1$, consider two nonzero ideals 
$I$, $J$ in it. 
By the Shirshov--Witt theorem about freeness of subalgebras of a free Lie algebra,
neither of $I$, $J$ can be one-dimensional, and hence we may choose
two linearly independent elements $x\in I$ and $y\in J$. 
Their commutator is nonzero, as otherwise they would
form a $2$-dimensional abelian subalgebra, which again contradicts the Shirshov--Witt
theorem. Hence $[I,J] \ne 0$. 

\medskip

Another situation when relatively free algebras are prime is described by the
following lemma.

\begin{lemma}\label{zariski}
Let $A$ be a prime algebra over an infinite field. 
Then the free algebra of infinite rank in $Var(A)$ is prime.
\end{lemma}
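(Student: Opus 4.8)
The plan is to verify the elementwise criterion (ii) of Lemma~\ref{lemma} for the free algebra $\mathcal F$ of infinite rank in $Var(A)$, transporting the primeness of $A$ across the description of $\mathcal F$ as the quotient of the absolutely free algebra (on generators $x_1,x_2,\dots$) by the identities of $A$. Under this description a word $f$ represents a nonzero element $\bar f$ of $\mathcal F$ precisely when $f$ is not an identity of $A$, and substitution of elements of $\mathcal F$ into a word commutes with the canonical projection onto $\mathcal F$. So I would fix nonzero $\bar f,\bar g\in\mathcal F$, choose representing words $f,g$ — neither an identity of $A$ — involving only the generators $x_1,\dots,x_N$, and keep a further batch of generators $z_1,z_2,\dots$ in reserve (possible since the rank is infinite).

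The heart of the matter, and the step where the infiniteness of $K$ enters, is to produce a single substitution $x_i\mapsto a_i\in A$ for which $f(a_1,\dots,a_N)\ne 0$ and $g(a_1,\dots,a_N)\ne 0$ simultaneously. First I would cut down to finite dimension: fixing substitutions that witness $f$ and $g$ not being identities, the finitely many elements involved span a finite-dimensional subspace $A_0\subseteq A$ on which neither $f$ nor $g$ is an identity, and, degrees being bounded, all values $f(a_\ast)$ and $g(a_\ast)$ with $a_\ast\in A_0^N$ lie in a fixed finite-dimensional subspace $A_1$. Fixing bases of $A_0$ and $A_1$, the coordinates of $f(a_\ast)$ and of $g(a_\ast)$ become polynomial functions of the coordinates $\lambda\in K^{d}$ of the substitution $a_\ast$ (with $d=N\dim A_0$), and non-identity means that some coordinate polynomial $p$ of $f$, and some coordinate polynomial $q$ of $g$, is nonzero. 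Over the infinite field $K$ the product $pq$ is a nonzero polynomial, hence has a non-vanishing point $\lambda_0$; the corresponding substitution $a_\ast$ satisfies $f(a_\ast)\ne 0$ and $g(a_\ast)\ne 0$ at once.

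With $u=f(a_\ast)$ and $v=g(a_\ast)$ nonzero in $A$, the primeness of $A$ via criterion (ii) of Lemma~\ref{lemma} furnishes a $2$-nontrivial word $w(\xi_1,\dots,\xi_n)$ and elements $c_1,\dots,c_{n-2}\in A$ with $w(u,v,c_1,\dots,c_{n-2})\ne 0$. I would then form the word $W=w(f,g,z_1,\dots,z_{n-2})$, obtained by plugging $f,g$ into the first two slots of $w$ and the reserved generators into the rest. The substitution $x_i\mapsto a_i$, $z_k\mapsto c_k$ sends $W$ to $w(u,v,c_1,\dots,c_{n-2})\ne 0$, so $W$ is not an identity of $A$; therefore its image $w(\bar f,\bar g,\bar z_1,\dots,\bar z_{n-2})$ is nonzero in $\mathcal F$. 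Since $w$ is $2$-nontrivial, criterion (ii) of Lemma~\ref{lemma} holds and $\mathcal F$ is prime.

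The main obstacle is exactly the common substitution of the second paragraph: one cannot in general assume that $\bar f$ and $\bar g$ are written in disjoint generators (relabelling changes the element, and ideals need not be invariant under relabelling), so one is forced to make $f$ and $g$ nonzero under one and the same substitution, and this is where Zariski density — hence the infiniteness of $K$ — is indispensable. Indeed the hypothesis cannot be dropped: for a finite field $K$ the one-dimensional algebra $A=K$ is prime, yet $x_1^{|K|}-x_1$ is an identity of $K$, so in $\mathcal F$ the nonzero elements $\bar x_1$ and $1-\bar x_1^{|K|-1}$ multiply to $0$ and generate mutually annihilating ideals, showing that $\mathcal F$ fails to be prime.
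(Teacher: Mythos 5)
Your proof is correct and is essentially the paper's own argument run in the contrapositive direction: both rest on the same three ingredients, namely the identification of nonzero elements of $\mathcal F$ with non-identities of $A$ (with fresh free generators substituted for the extra arguments of the $2$-nontrivial word), criterion (ii) of Lemma \ref{lemma} applied to $A$, and the infiniteness of $K$ used to produce a single substitution at which $f$ and $g$ are simultaneously nonzero -- where the paper restricts to the affine line through the two witnessing tuples and uses univariate polynomials, you invoke multivariable Zariski density after the same reduction to finite-dimensional subspaces, a cosmetic difference. The only slip is in your closing aside, which the lemma does not require: in this paper's nonunital setting $\mathcal F$ contains no element $1-\bar x_1^{|K|-1}$; the pair should be replaced by, say, $\bar x_1$ and $\bar x_2-\bar x_1^{|K|-1}\bar x_2$, which are nonzero and whose ideals indeed have zero product.
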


\begin{proof}
Let $\mathcal F(X)$ be the free algebra in $Var(A)$ freely generated by an infinite 
set $X = \{ x_1, x_2, \dots \}$. Suppose that there are nonzero elements 
$u(x_1, \dots, x_n)$, $v(x_1, \dots, x_n) \in \mathcal F(X)$ such that 
\begin{equation}\label{yoyo}
w(u, v, u_1, \dots, u_m) = 0
\end{equation}
for any 2-nontrivial word $w$ and $u_1, \dots, u_m \in \mathcal F(X)$. 
As any relation between free generators of $\mathcal F(X)$ is an identity in $A$, the 
equalities (\ref{yoyo}) are identities in $A$. Taking $u_1 = x_{n+1}, u_2 = x_{n+2}, \dots$,
we get that 
$$
w(u(x_1, \dots, x_n), v(x_1, \dots, x_n), x_{n+1}, \dots, x_{n+m}) = 0
$$
for any 2-nontrivial word $w$ and any $x_1, \dots, x_n, x_{n+1}, \dots, x_{n+m} \in A$. 
As $A$ is prime, either \newline $u(x_1, \dots, x_n) = 0$ or $v(x_1, \dots, x_n) = 0$ 
for any $x_1, \dots, x_n \in A$.

Fix some basis of $A$, take 
$(x_1, \dots, x_n) \in A^n = A \times \dots \times A$ ($n$ times) at which $u$ is
nonzero, $(y_1, \dots, y_n) \in A^n$ at which $v$ is nonzero, and consider the line
in $A^n$ of all affine linear combinations 
\begin{equation}\label{affine}
\lambda(x_1, \dots, x_n) + (1 - \lambda)(y_1, \dots, y_n) ,
\end{equation}
where $\lambda$ is an element of the base field $K$. 
On this line, the coordinates of $u$ and $v$ in terms of the chosen basis
are each given by a polynomial in $\lambda$. So, picking a $K$-linear function
on $A$ such whose value at $u(x_1, \dots, x_n)$ is nonzero and another such 
whose value at $v(y_1, \dots, y_n)$ is nonzero, we see that for all but finitely many
$\lambda\in K$, both $u$ and $v$ will be nonzero on the $n$-tuple (\ref{affine}),
a contradiction.

If $A$ is finite-dimensional, we can use a similar, but more elegant 
Zariski-topology argument instead: both sets of $n$-tuples of elements of $A$ on 
which $u$, respectively $v$, does not vanish, form a nonempty Zariski-open subset 
in $A^n$, whence they have a nonzero intersection, a contradiction.
\end{proof}

All these observations lead to a variant of the general 
Corollary \ref{no-ident-universal} for the case of algebras:

\begin{corollary}[Criterion for absence of non-trivial identities for algebras]\label{no-ident}
For an algebra $A$ belonging to one of the following varieties of algebras: 
all algebras, associative algebras, or Lie algebras, the following are equivalent:
\begin{enumerate}
\item $A$ does not satisfy a nontrivial identity;
\item any free algebra embeds in an ultrapower of $A$;
\item any free algebra embeds in an algebra elementarily equivalent to $A$.
\end{enumerate}
\end{corollary}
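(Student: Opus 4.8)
The plan is to run the cycle (i) $\Rightarrow$ (ii) $\Rightarrow$ (iii) $\Rightarrow$ (i) along the lines of Corollary \ref{no-ident-universal}, the only genuine work being to keep the base field under control, since it is precisely the field that obstructs a verbatim appeal to that corollary.

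For (i) $\Rightarrow$ (ii) I would first note that, by Remark (3) following Corollary \ref{no-ident-universal}, it suffices to embed the free algebra $\mathcal F$ of countable rank, and that if $A$ satisfies no nontrivial identity then $Var(A)$ is the full variety, so $\mathcal F$ is the honest free (absolutely free, free associative, or free Lie) algebra. Having rank $>1$, this $\mathcal F$ is prime --- by absence of zero divisors in the first two cases and by the Shirshov--Witt argument in the Lie case --- hence finitely subdirectly irreducible, and Corollary \ref{cor} produces a $K$-embedding $\mathcal F \hookrightarrow A^{\mathscr U}$ into an ultrapower. The ultrapower carries the larger field $K^{\mathscr U}$, so this is so far only a $K$-embedding; to obtain an embedding of free algebras over the ultrapower field I would extend scalars along $K \hookrightarrow K^{\mathscr U}$. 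The image of $\mathcal F \otimes_K K^{\mathscr U}$ is the subalgebra $K^{\mathscr U}\mathcal F$, and the surjection (\ref{surj}) is here a bijection: for $\mathcal F$ absolutely free or free Lie this is Lemma \ref{embed}, applicable because such $\mathcal F$ has no commutative subspace of dimension $>1$, and for $\mathcal F$ free associative it follows from the Makar-Limanov-type argument quoted after that lemma. Since the monomial (respectively Lyndon) basis of a free algebra does not depend on the field, $\mathcal F \otimes_K K^{\mathscr U}$ is itself free over $K^{\mathscr U}$, and composing the isomorphisms embeds the free $K^{\mathscr U}$-algebra into $A^{\mathscr U}$ over $K^{\mathscr U}$, which is (ii).

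The step (ii) $\Rightarrow$ (iii) is the {\L}o\'s theorem: $A^{\mathscr U} \equiv A$, so one takes $B = A^{\mathscr U}$. For (iii) $\Rightarrow$ (i) I would argue contrapositively. A nontrivial identity of $A$ is a single universal first-order sentence whose coefficients lie in $K$ (named as constants), hence it passes to any $B \equiv A$ and to the free algebra embedded in $B$; but a $K$-coefficient identity holds in the free $K^{\mathscr U}$-algebra only if it holds in the free $K$-algebra, the scalar extension being flat, so the identity would already be trivial --- a contradiction.

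I expect the base-field reconciliation inside (i) $\Rightarrow$ (ii) to be the crux: the Birkhoff theorem operates over $K$ while the {\L}o\'s theorem moves to $K^{\mathscr U}$, and it is precisely Lemma \ref{embed}, together with its free-associative analogue, that bridges the gap by turning the merely $K$-linear embedding of Corollary \ref{cor} into an embedding of the free algebra over the ultrapower field.
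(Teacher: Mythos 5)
Your proof is correct and takes essentially the same route as the paper's: Corollary \ref{cor} applied to the prime (hence finitely subdirectly irreducible) free algebras of the three varieties, upgraded from a $K$-embedding to a $K^{\mathscr U}$-embedding via Lemma \ref{embed} and its free-associative (Makar-Limanov) analogue, followed by the {\L}o\'s theorem for (ii) $\Rightarrow$ (iii) and the first-order character of identities for (iii) $\Rightarrow$ (i). The additional details you supply (reduction to countable rank, freeness of $\mathcal F \otimes_K K^{\mathscr U}$ over $K^{\mathscr U}$, naming the coefficients from $K$ as constants) only make explicit what the paper's terser proof leaves implicit.
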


\begin{proof}
(i) $\Rightarrow$ (ii) follows from Corollary \ref{cor}.
Note that in view of Lemma \ref{embed} (applied to the case $F = K^\mathscr U$, 
where $K$ is the base field, and $\mathscr U$ is an appropriate ultrafilter) and remarks after 
it, the embedding of Corollary \ref{cor} can be considered as an embedding of 
$K^{\mathscr U}$-algebras.

(ii) $\Rightarrow$ (iii)
as in the proof of Corollary \ref{no-ident-universal}:
by the {\L}o\'s theorem, the pairs $(B, K)$ and $(B^{\mathscr U}, K^{\mathscr U})$
are elementarily equivalent as models of the two-sorted theory 
(algebra over a field, field).

(iii) $\Rightarrow$ (i) as in the proof of Corollary \ref{no-ident-universal}.
\end{proof}

All the remarks after Corollary \ref{no-ident-universal} are applicable. In addition, as 
the free associative, respectively Lie, algebra of countable rank embeds in the 
free associative, respectively Lie, algebra of rank $2$, for these varieties 
one can replace ``any free algebra'' in conditions (ii) and (iii) by 
``the free algebra of rank $2$''.

\medskip

Let us provide some negative examples showing that conclusions 
of some statements of this section do not always hold.

The conclusion of Lemma \ref{embed} does not hold for polynomial
algebras in $>1$ variables (i.e., for free associative commutative algebras).
Indeed, let $F = K(x)$ (the field of rational functions in $1$ variable) and
$A = K[x,y]$ (the algebra of polynomials in $2$ variables), considered as a
$K$-subalgebra of $K(x,y)$ (the field of rational functions in $2$ variables). Then 
$$
FA = K(x)[y] \simeq K[y] \otimes_K K(x) ,
$$
but
$A \otimes_K F = K[x,y] \otimes_K K(x)$.

In some other situations, the surjection (\ref{surj}) also can be very far from 
being a bijection. For example, consider the free algebra $\mathcal F$ 
of countable rank in the variety $Var(A)$ generated by a finite-dimensional prime algebra
over an infinite field $K$. By Lemma \ref{zariski}, $\mathcal F$ is prime, and by 
Corollary \ref{cor}, $\mathcal F$ embeds, as a $K$-algebra, in an ultrapower
$A^{\mathscr U} \simeq A \otimes_K K^{\mathscr U}$ (see Corollary \ref{fin-dim} below).
Hence $K^{\mathscr U} \mathcal F$ is a finite-dimensional $K^{\mathscr U}$-algebra.
On the other hand, since $\mathcal F$ is residually nilpotent 
(see, for example, \cite[\S 4.2.10]{bahturin} for the case of Lie algebras; the general case
is treated identically), the finite-dimensionality of $\mathcal F$ 
would imply nilpotency of $\mathcal F$, and hence nilpotency of $A$, a contradiction. 
Consequently, $\mathcal F$ is infinite-dimensional over $K$,
and $\mathcal F \otimes_K K^{\mathscr U}$ is infinite-dimensional over $K^{\mathscr U}$.
We will use essentially the same type of arguments below in \S \ref{same-ident}, when
describing an alternative approach to a result of Kushkulei and Razmyslov about
varieties generated by simple finite-dimensional Lie algebras.

\section{Groups}

As the class of (all) groups is, obviously, ideal-determined, Theorem \ref{ultra} and
Corollary \ref{cor} apply to it. 

Let us single out an important
condition for groups of which the finite subdirect irreducibility is a consequence:

\begin{lemma}\label{p}
A group in which any two commuting elements generate a cyclic subgroup either of
prime order, or of infinite order, is finitely subdirectly irreducible.
\end{lemma}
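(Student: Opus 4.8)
The plan is to prove the contrapositive: assume the group $G$ is not finitely subdirectly irreducible, and derive the existence of two commuting elements that generate a subgroup which is neither cyclic of prime order nor cyclic of infinite order. In the group setting, ideals are exactly normal subgroups, and $G$ being finitely subdirectly irreducible means that the intersection of any two nontrivial normal subgroups is nontrivial. So suppose instead that there exist two nontrivial normal subgroups $M$ and $N$ with $M \cap N = \{e\}$. The standard consequence of this is that $M$ and $N$ commute elementwise: for $x \in M$ and $y \in N$, the commutator $[x,y] = x^{-1}y^{-1}xy$ lies in $M \cap N$ (since $x^{-1}(y^{-1}xy) \in M$ by normality of $M$, and $(x^{-1}y^{-1}x)y \in N$ by normality of $N$), hence $[x,y] = e$. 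Thus every element of $M$ commutes with every element of $N$.

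Now I would pick nontrivial elements $x \in M$ and $y \in N$; these commute by the above. The hypothesis of the lemma forces $\langle x, y \rangle$ to be cyclic of prime order or of infinite order. I would like to extract a contradiction from the disjointness $M \cap N = \{e\}$ together with this cyclicity. The key point is that in a cyclic group $C = \langle c \rangle$, any two nontrivial subgroups intersect nontrivially: if $C$ is infinite cyclic, every nontrivial subgroup is $\langle c^k \rangle$ and two such subgroups $\langle c^{k_1}\rangle$, $\langle c^{k_2}\rangle$ share $\langle c^{\mathrm{lcm}(k_1,k_2)}\rangle \ne \{e\}$; if $C$ has prime order, its only nontrivial subgroup is $C$ itself. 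Since $x$ and $y$ are nontrivial elements of the cyclic group $\langle x,y\rangle$, the subgroups $\langle x \rangle$ and $\langle y \rangle$ are nontrivial and so must intersect nontrivially inside $\langle x,y\rangle$, hence inside $G$; but $\langle x\rangle \subseteq M$ and $\langle y\rangle \subseteq N$ gives $\langle x\rangle \cap \langle y\rangle \subseteq M \cap N = \{e\}$, the desired contradiction.

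The one subtlety worth stating carefully is the claim that two nontrivial subgroups of a cyclic group always meet nontrivially, which is what makes both allowed cases fail; this is elementary but is the crux of why the specific hypothesis (prime or infinite order) is exactly the right one. I expect this to be the main place to be precise rather than the main obstacle, since the commuting-subgroups argument is routine. Assembling these pieces yields that no such pair $M, N$ can exist, so $G$ is finitely subdirectly irreducible.
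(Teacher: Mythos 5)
Your proof is correct and is essentially the paper's argument run in contrapositive form: the paper takes nontrivial $x\in N$, $y\in M$ and either produces the nontrivial commutator $xyx^{-1}y^{-1}\in N\cap M$ (your elementwise-commuting step is the same computation) or, when $x,y$ commute, writes $x=a^n$, $y=a^m$ in the cyclic group and exhibits $a^{nm}\ne 1$ as a common element, which is exactly your lcm/prime-order observation about nontrivial subgroups of a cyclic group meeting nontrivially. The mathematical ingredients coincide; only the logical packaging differs.
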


\begin{proof}
Let $G$ be a group with the given property, and $N$, $M$ be two normal subgroups of $G$.
Take $x\in N, y\in M$, $x,y \ne 1$. If $x,y$ do not commute, then
$1 \ne xyx^{-1}y^{-1} \in N \cap M$. 
If $x,y$ commute, then they generate a cyclic subgroup of $G$,
generated by a single element $a\in G$, either of prime order $p$, 
or of infinite order.
Write $x = a^n$, $y = a^m$ for some $0 < n,m < p$ in the first case, and some 
nonzero integers $n,m$ in the second one. We have $a^{nm} \in M \cap N$, and due to 
the restriction on $n,m$, $a^{nm} \ne 1$ in both cases. 
\end{proof}

Now we can establish Corollary \ref{no-ident-universal} for the variety of all groups
and for Burnside varieties:

\begin{corollary}[Criterion for absence of non-trivial identities for groups]\label{cor2}
For a group $G$ belonging to one of the following varieties: all groups,
groups satisfying the identity $x^p = 1$ for a prime $p \ge 673$, the following
are equivalent:
\begin{enumerate}
\item $G$ does not satisfy a nontrivial identity within the given variety;
\item any free group in the variety embeds in an ultrapower of $G$;
\item any free group in the variety embeds in a group elementarily equivalent to $G$.
\end{enumerate}
\end{corollary}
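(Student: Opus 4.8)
The plan is to deduce this directly from the general Corollary \ref{no-ident-universal}, applied to the variety $\mathfrak V$ of all groups and, respectively, to the Burnside variety defined by $x^p = 1$. Since the class of groups is ideal-determined (normal subgroups being in one-to-one correspondence with congruences), the only hypothesis of Corollary \ref{no-ident-universal} that needs checking is that \emph{every} free group in the variety under consideration is finitely subdirectly irreducible. I would verify this in both cases by means of Lemma \ref{p}, i.e.\ by showing that any two commuting nontrivial elements of the relevant free group generate a cyclic subgroup either of prime or of infinite order. Note that, in contrast to \S\ref{sect-alg}, no base-field complications arise here: groups are not algebras over a field, so the universal-algebraic Corollary \ref{no-ident-universal} applies verbatim.

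For the variety of all groups, I would invoke the classical fact that the centralizer of any nontrivial element of a free group is infinite cyclic. Two commuting nontrivial elements therefore lie in a common infinite cyclic subgroup and generate a cyclic group of infinite order; thus the hypothesis of Lemma \ref{p} holds and every free group is finitely subdirectly irreducible (free groups of ranks $0$ and $1$ being trivially so).

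For the Burnside variety with prime exponent $p \ge 673$ the corresponding input is considerably deeper. By the Novikov--Adian theorem, in the form established by Adian for odd exponents $\ge 665$, the free Burnside groups of exponent $p$ are infinite and, crucially, the centralizer of every nontrivial element is cyclic of order $p$. Since $p$ is prime, any two commuting nontrivial elements generate a cyclic subgroup of prime order, so Lemma \ref{p} once again yields finite subdirect irreducibility (the free groups of rank $1$ being cyclic of prime order $p$, hence simple). The numerical bound is arranged precisely so that $p \ge 665$ permits citing Adian while primality of $p$ guarantees that the cyclic centralizers have prime order; $673$ is simply the least prime exceeding $665$.

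Having established that all free groups in each variety are finitely subdirectly irreducible, the equivalence of (i), (ii) and (iii) is exactly the assertion of Corollary \ref{no-ident-universal}. The hard part will be entirely the Burnside case: the reduction to Corollary \ref{no-ident-universal} and the free-group case are routine, but the finite subdirect irreducibility of free Burnside groups rests on the difficult structural results of combinatorial group theory describing abelian subgroups (equivalently, centralizers) in free Burnside groups of large odd exponent.
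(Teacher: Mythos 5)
Your proposal is correct and follows essentially the same route as the paper: reduce to Corollary \ref{no-ident-universal} via Lemma \ref{p}, checking the hypothesis for absolutely free groups by the structure of their abelian subgroups (the paper cites Nielsen--Schreier, you cite the equivalent fact that centralizers of nontrivial elements are infinite cyclic) and for free Burnside groups by Adian's theorem that abelian subgroups of $B(n,p)$ are cyclic of order $p$ for odd exponent $\ge 665$. Your reading of the bound $p \ge 673$ as the least prime exceeding $665$ also matches the paper's explanation.
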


The restriction on $p$ is stipulated, of course, by the celebrated
Novikov--Adian solution of the Burnside problem (see \cite{adyan}):
the Burnside group $B(n,m)$ of exponent $m$ freely generated by $n$ elements,
is infinite for odd $m\ge 665$, and $673$ is the next prime after $665$.

\begin{proof}
Follows from Corollary \ref{no-ident-universal}, Lemma \ref{p}, 
and the fact that the corresponding free groups satisfy the hypothesis of 
Lemma \ref{p}:
from \cite[Chapter VI, \S 3, Theorem 3.3]{adyan} it follows that under the given 
restriction on $p$, any abelian subgroup of the Burnside group $B(n,p)$ is cyclic 
of order $p$, 
and from the Nielsen--Schreier theorem about freeness of subgroups of free 
groups it follows that any abelian subgroup of an absolutely free group
is infinite cyclic.
\end{proof}

The same remarks as those after Corollary \ref{no-ident} apply also in the group case. 
In particular, condition (ii) is equivalent to the condition that the universal theory 
of $G$ is contained in the universal theory of any free group.
As in the case of Lie and associative algebras, ``any free group'' in conditions
(ii) and (iii) of the corollary, can be replaced by ``the free group of rank $2$'':
an embedding of the free group of countable rank into the free group of rank $2$
is well-known in the case of absolutely free groups, and is established in 
\cite{shirv} (see also \cite[\S 3]{atabekyan} and references therein for the 
later alternative approaches) in the case of Burnside groups.

Corollary \ref{cor2} is probably known to experts -- 
at least statements equivalent to the implication (i) $\Rightarrow$ (ii) in the case
of the variety of all groups can be found in 
\cite[Theorem 1]{boffa} and \cite[Lemma 6.15]{drutu-s}. The proofs there are 
different and based on the fact that ultraproducts are $\omega_1$-compact.

\begin{question}
Is there a semigroup property such that the corresponding analogs of the results
of this section would hold in the class of all semigroups? 
The same question for the classes of inverse semigroups and quasigroups.
\end{question}

Note that neither of these classes is ideal-determined.

\section{Application: PI algebras}\label{pi}

As an application of this machinery, let us demonstrate how one can handle, in a 
way different from the traditional approaches, some well-known statements from the 
theory of associative algebras satisfying polynomial identities (usually called
PI algebras).

The Regev celebrated ``$A\otimes B$'' theorem asserts that the tensor product
of two PI algebras $A$ and $B$ is PI (see, for example, 
\cite[Theorem 5.42]{belov-rowen}).
If we want to prove it using results of \S \ref{sect-univalg}, we encounter a few
difficulties: first, to establish relationship between
the ultrapower of the tensor product $(A \otimes B)^{\mathscr U}$ and the tensor product of
ultrapowers $A^{\mathscr U} \otimes B^{\mathscr U}$ (perhaps, considering some sort of 
completed tensor product instead of the usual one may help), and, second, to be able to say
something about algebras $A$ and $B$ such that their (possibly completed) tensor product 
contains a free associative algebra. But at least, in this way we are able to provide
an alternative proof of the particular case where one of the
tensor factors is finite-dimensional (first established by Procesi and Small in 
\cite{ps} for the even more particular case where one of the tensor factors is a
full matrix algebra; a quantitative refinement (degrees of the corresponding
standard identities) of Procesi--Small result was later
given in \cite[Theorem 3.2]{domokos}).
This particular case is morally important, as semiprime PI algebras embed
in matrix algebras over commutative rings (see, for example, 
\cite[Remark 1.69]{belov-rowen}),
which essentially reduces the semiprime situation to a finite-dimensional one.

\begin{theorem}[``Baby Regev's $A\otimes B$'']\label{regev}
The tensor product of two associative algebras, one of them PI and the other 
finite-dimensional, is PI.
\end{theorem}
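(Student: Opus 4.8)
The plan is to prove the theorem in contrapositive form through the criterion for absence of nontrivial identities, Corollary \ref{no-ident}. Write $C = A \otimes_K B$, where $A$ is PI and $\dim_K B = d < \infty$. Applied in the variety of associative algebras, Corollary \ref{no-ident} says that $C$ is PI if and only if it is \emph{not} the case that every free associative algebra embeds in an ultrapower of $C$; and by the remark following that corollary it suffices to test the free associative algebra of rank $2$. So I would fix an arbitrary ultrafilter $\mathscr U$ and show that the free associative algebra of rank $2$ does not embed in $C^{\mathscr U}$.

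The first substantive step is to compute $C^{\mathscr U}$, and this is exactly where finite-dimensionality of $B$ is essential. Fixing a $K$-basis $b_1, \dots, b_d$ of $B$ and writing every element of $C$ uniquely as $\sum_i a_i \otimes b_i$, one checks that the ultrapower passes through the finite tensor factor, yielding a ring isomorphism $C^{\mathscr U} \simeq A^{\mathscr U} \otimes_K B$ (the structure constants of $B$ lie in $K$ and are unaffected by the ultrapower). By Corollary \ref{fin-dim}, applied to the finite-dimensional $B$, we have $B^{\mathscr U} \simeq B \otimes_K K^{\mathscr U}$, so this rewrites as the tensor product $A^{\mathscr U} \otimes_{K^{\mathscr U}} B^{\mathscr U}$ over the ultrapower field $K^{\mathscr U}$, with $B^{\mathscr U}$ still of dimension $d$.

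Now, since $A$ satisfies a nontrivial polynomial identity, the {\L}o\'s theorem (for the two-sorted structure (algebra, field), as in the proof of Corollary \ref{no-ident}) shows that $A^{\mathscr U}$ satisfies the same identity over $K^{\mathscr U}$; thus $A^{\mathscr U}$ is again PI. Embedding the $d$-dimensional algebra $B^{\mathscr U}$ into $M_d(K^{\mathscr U})$ via its regular representation gives
\[
C^{\mathscr U} \simeq A^{\mathscr U} \otimes_{K^{\mathscr U}} B^{\mathscr U} \hookrightarrow A^{\mathscr U} \otimes_{K^{\mathscr U}} M_d(K^{\mathscr U}) = M_d(A^{\mathscr U}).
\]
Since a subalgebra of a PI algebra is PI, it suffices to show that $M_d(A^{\mathscr U})$ is PI; that is, that the free associative algebra does not embed in $M_d(A')$ for a PI algebra $A'$ over a field.

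This reduction isolates the genuine content, and I expect the statement that matrices over a PI algebra form a PI algebra to be the main obstacle: it is precisely the Procesi--Small phenomenon, and the soft ultrafilter formalism does not by itself produce the identity, whose degree must grow with $d$. Feeding $M_d(A')$ back into the criterion of Theorem \ref{ultra} only returns $M_d((A')^{\mathscr V})$ with $(A')^{\mathscr V}$ again PI, so no further simplification is gained recursively. I would therefore discharge this last step by a concrete argument outside the ultrafilter machinery -- either the codimension estimate $c_n(M_d(A')) \le c_n(A')\,c_n(M_d(K'))$ combined with the exponential boundedness of the codimensions of a PI algebra and of the finite-dimensional $M_d(K')$, or, equivalently, by invoking the explicit Procesi--Small identity for matrices. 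The virtue of the route above is that the construction of Theorem \ref{ultra} reduces the original tensor-product assertion cleanly to this single, well-understood matrix statement.
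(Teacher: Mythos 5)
Your chain of reductions is fine as far as it goes: the computation $C^{\mathscr U} \simeq A^{\mathscr U} \otimes_{K^{\mathscr U}} B^{\mathscr U}$ via Lemma \ref{otimes} and Corollary \ref{fin-dim}, the {\L}o\'s argument that $A^{\mathscr U}$ is PI, and the regular-representation embedding into $M_d(A^{\mathscr U})$ (which needs $B$ unital, or a unit adjoined, to be faithful -- a minor fix). But the step you correctly flag as ``the main obstacle'' is a genuine gap, not a discharge-by-citation: the statement that $M_d(A')$ is PI for every PI algebra $A'$ is not a smaller auxiliary fact. Via your own embedding it is \emph{equivalent} to the theorem being proved, and it is literally the Procesi--Small theorem \cite{ps} that the paper presents this theorem as re-proving. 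Quoting it turns the proof into ``reduce the theorem to its known special case and cite the literature,'' while your codimension alternative ($c_n(A\otimes B) \le c_n(A)c_n(B)$ plus Regev's exponential bound on codimensions) is precisely Regev's proof of the full $A\otimes B$ theorem, which subsumes the statement outright. Worse, once either of these is granted, your entire ultrafilter setup does no work: $A \otimes_K B \hookrightarrow M_{d+1}(A)$ already at the level of the algebras themselves, so Corollary \ref{no-ident}, Lemma \ref{otimes} and {\L}o\'s can all be deleted from your argument without loss.

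The idea you are missing -- and the point where the paper's proof genuinely diverges -- is that one should never ask for an identity of the matrix algebra at all. The ultrafilter machinery converts ``$A \otimes_K B$ is not PI'' into ``some ultrapower contains a free associative subalgebra $\mathcal F$ of rank $\ge 2$,'' and that embedding statement is then refuted by a \emph{growth} argument. In your notation: since $\mathcal F$ is finitely generated, its generators involve only finitely many elements of $A^{\mathscr U}$, so $\mathcal F$ lies in $A' \otimes_{K^{\mathscr U}} B^{\mathscr U}$ for some finitely generated subalgebra $A'$ of the PI algebra $A^{\mathscr U}$. By the Shirshov height theorem (see the proof of Theorem 9.19 in \cite{belov-rowen}), a finitely generated PI algebra has polynomial growth; tensoring with the finite-dimensional $B^{\mathscr U}$ preserves polynomial growth; but a free associative algebra of rank $\ge 2$ has exponential growth -- contradiction. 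The only PI-theoretic input is Shirshov's theorem, which is strictly weaker than producing an identity for $M_d(A')$, and this is exactly why the argument is neither circular nor a restatement of Regev's. Your matrix reduction can even be kept and finished the same way: any finitely generated subalgebra of $M_d(A^{\mathscr U})$ lies in $M_d(A'')$ with $A''$ finitely generated PI, hence of polynomial growth, hence containing no free subalgebra of rank $2$.
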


\begin{lemma}\label{otimes}
Let $A$, $B$ be algebras defined over a field $K$, $A$ finite-dimensional. 
Then, for any ultrafilter
$\mathscr U$, 
$$
(A \otimes_K B)^{\mathscr U} \simeq A \otimes_K B^{\mathscr U}
$$
(as $K$-algebras).
\end{lemma}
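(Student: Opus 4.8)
The plan is to exhibit an explicit $K$-algebra isomorphism between $(A \otimes_K B)^{\mathscr U}$ and $A \otimes_K B^{\mathscr U}$, exploiting crucially that $A$ is finite-dimensional. Fix a $K$-basis $e_1, \dots, e_d$ of $A$. The key structural observation is that since $A$ is finite-dimensional, every element of $A \otimes_K B$ can be written uniquely as $\sum_{j=1}^d e_j \otimes b_j$ with $b_j \in B$; in other words, $A \otimes_K B \simeq B^d$ as a $K$-vector space (a direct sum of $d$ copies of $B$, one for each basis vector), and the multiplication is determined by the structure constants of $A$ together with that of $B$. This coordinatization is what makes the result work, and is precisely where finite-dimensionality of $A$ is indispensable.

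First I would take an element of $(A \otimes_K B)^{\mathscr U}$, that is, a class of a function $\mathbb I \to A \otimes_K B$, $i \mapsto \sum_{j=1}^d e_j \otimes b_j(i)$, modulo the ideal $\mathcal I(\prod_{i \in \mathbb I} (A \otimes_K B), \mathscr U)$. Using the unique coordinate representation, this is the same data as $d$ functions $b_j \colon \mathbb I \to B$, and hence $d$ classes in $B^{\mathscr U}$. I would then define the candidate map
\begin{equation*}
\Phi \colon (A \otimes_K B)^{\mathscr U} \to A \otimes_K B^{\mathscr U},
\qquad
\Phi\Big(\Big[\sum_{j=1}^d e_j \otimes b_j\Big]\Big) = \sum_{j=1}^d e_j \otimes [b_j],
\end{equation*}
where $[\,\cdot\,]$ denotes the relevant ultraproduct class. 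The next step is to check that $\Phi$ is well-defined: this reduces to verifying that the coordinatewise decomposition is compatible with the two ultrafilter ideals, i.e. that $\sum_j e_j \otimes b_j$ lies in $\mathcal I(\prod_i (A \otimes_K B), \mathscr U)$ if and only if each $[b_j] = 0$ in $B^{\mathscr U}$. Because $e_1, \dots, e_d$ are linearly independent over $K$, the function $\sum_j e_j \otimes b_j(i)$ equals $0$ in $A \otimes_K B$ exactly when all $b_j(i) = 0$ simultaneously; and since there are only finitely many indices $j$, the set $\set{i}{b_j(i) = 0 \text{ for all } j}$ lies in $\mathscr U$ precisely when each $\set{i}{b_j(i) = 0}$ does, using that $\mathscr U$ is closed under finite intersections and supersets. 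This finiteness of $d$ is the pivot of the whole argument.

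Having established that $\Phi$ is a well-defined $K$-linear bijection (injectivity and surjectivity both following from the same coordinate bookkeeping), it remains to check it is a $K$-algebra homomorphism, which amounts to comparing products $e_j \cdot e_k = \sum_\ell c_{jk}^\ell e_\ell$ computed via the structure constants $c_{jk}^\ell \in K$ of $A$ on both sides; these constants are the same in every copy and pass through the ultraproduct untouched, so multiplicativity is immediate. The main obstacle, such as it is, is purely in getting the well-definedness bookkeeping right, namely the equivalence between a single finite-intersection condition in $\mathscr U$ and the conjunction of $d$ separate membership conditions; this is exactly where the hypothesis that $A$ is finite-dimensional cannot be dispensed with, and one sees that for infinite-dimensional $A$ an infinite system of conditions on $\mathscr U$ would arise and the argument would break, which is consistent with the negative examples given after Corollary \ref{no-ident}.
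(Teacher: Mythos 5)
Your proposal is correct and follows essentially the same route as the paper: fix a $K$-basis of $A$, coordinatize $A\otimes_K B$ as a finite direct sum of copies of $B$, and match the two ultrafilter ideals coordinatewise, with the finite-intersection property of the filter doing the decisive work. The only cosmetic difference is that the paper first builds the isomorphism $(A\otimes_K B)^{\mathbb I}\simeq A\otimes_K B^{\mathbb I}$ on the full direct powers and then factors both sides by the corresponding ideals, whereas you define the map directly on the quotients and verify well-definedness — the underlying computation is identical.
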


Special cases of this assertion were proved many times in literature --
for example, in \cite{nita} for the case where $A$ is an associative full 
matrix algebra, and in \cite[Proposition 25]{taha} for the case where both $A$ 
and $B$ are finite-dimensional, and the proof is standard.

\begin{proof}
Let $\{a_1, \dots, a_n\}$ be a basis of $A$. Obviously, for each $K$-algebra $C$,
each element of $A \otimes_K C$ can be uniquely represented as 
$\sum_{k=1}^n a_k \otimes c_k$ for some $c_k \in C$. Define a map 
$$
\varphi: (A \otimes_K B)^{\mathfrak I} \to A \otimes_K B^{\mathfrak I}
$$
as follows: for $f\in (A\otimes_K B)^{\mathfrak I}$
write $f(i) = \sum_{k=1}^n a_k \otimes b_{ki}$, $i\in \mathfrak I$ and define
$\varphi(f) \in A \otimes_K B^{\mathfrak I}$ as $\sum_{k=1}^n a_k \otimes g_k$, where
$g_k \in B^{\mathfrak I}$ is defined as $g_k(i) = b_{ki}$, $i\in \mathfrak I$.
Writing multiplication in $A$ in terms of the basis elements, one can see that
$\varphi$ is an isomorphism of $K$-algebras.

The ideal $\mathcal I((A\otimes_K B)^{\mathfrak I}, \mathscr U)$ maps under $\varphi$ to 
$A \otimes_K \mathcal I(B^{\mathfrak I}, \mathscr U)$, so factoring out both sides of the 
isomorphism $\varphi$ by the corresponding ideals, we get:
\begin{multline*}
(A\otimes_K B)^{\mathscr U} = 
(A\otimes_K B)^{\mathfrak I} / \mathcal I((A\otimes_K B)^{\mathfrak I}, \mathscr U) \simeq
(A \otimes_K B^{\mathfrak I}) / (A \otimes_K \mathcal I(B^{\mathfrak I}, \mathscr U)) \\ \simeq
A \otimes_K (B^{\mathfrak I} / \mathcal I(B^{\mathfrak I}, \mathscr U)) =
A \otimes_K B^{\mathscr U}.
\end{multline*}
\end{proof}

\begin{corollary}[\protect{\cite[Proposition 21]{taha}}]\label{fin-dim}
Let $A$ be a finite-dimensional algebra defined over a field $K$. Then, for any 
ultrafilter $\mathscr U$, $A^{\mathscr U} \simeq A \otimes_K K^{\mathscr U}$ (as 
$K$-algebras).
\end{corollary}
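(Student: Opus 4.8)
The plan is to obtain this as an immediate specialization of Lemma \ref{otimes}, taking for the second tensor factor the base field itself. Concretely, I would set $B = K$, regarded as a one-dimensional algebra over itself. Since $A$ is finite-dimensional, Lemma \ref{otimes} applies and yields the isomorphism of $K$-algebras
$$
(A \otimes_K K)^{\mathscr U} \simeq A \otimes_K K^{\mathscr U} .
$$

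Next I would invoke the canonical isomorphism $A \otimes_K K \simeq A$ of $K$-algebras. As the ultrapower construction respects isomorphisms — an isomorphism of algebras induces an isomorphism of their ultrapowers with respect to the same ultrafilter — this gives $(A \otimes_K K)^{\mathscr U} \simeq A^{\mathscr U}$. Combining this with the display above yields exactly $A^{\mathscr U} \simeq A \otimes_K K^{\mathscr U}$, as claimed.

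The only point requiring a word of care — and it is not really an obstacle — is the identification of the factor $B^{\mathscr U} = K^{\mathscr U}$ appearing on the right-hand side of Lemma \ref{otimes}: this is the ultrapower of $K$ taken as a $K$-algebra, and one checks that it coincides with the ultrapower field $K^{\mathscr U}$, the algebra and field structures being the same (the $K$-algebra structure on $K^{\mathscr U}$ arising from the diagonal embedding $K \hookrightarrow K^{\mathscr U}$). With this observation the two occurrences of $K^{\mathscr U}$ agree and the corollary follows. All of the substance of the argument is already contained in Lemma \ref{otimes}; the corollary is a pure specialization, so I expect no genuine difficulty.
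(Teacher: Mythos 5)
Your proposal is correct and is exactly the paper's argument: the paper's entire proof reads ``Put $B = K$'' in Lemma \ref{otimes}, and your additional remarks (identifying $A \otimes_K K$ with $A$ and the $K$-algebra ultrapower of $K$ with the ultrapower field $K^{\mathscr U}$) merely spell out the routine identifications the paper leaves implicit.
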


\begin{proof}
Put $B = K$.
\end{proof}

\begin{proof}[Proof of Theorem \ref{regev}]
Let $A$ be a finite-dimensional associative algebra, and $B$ a PI algebra, 
defined over a field $K$. Suppose $A \otimes_K B$ is not PI. 
Then by Corollary \ref{no-ident}, some ultrapower $(A \otimes_K B)^{\mathscr U}$,
considered as a $K^{\mathscr U}$-algebra, contains a free associative
subalgebra $\mathcal F$ of finite rank. 
By Lemma \ref{otimes}, 
$$
(A \otimes_K B)^{\mathscr U} \simeq 
(A \otimes_K K^{\mathscr U}) \otimes_{K^{\mathscr U}} B^{\mathscr U}
$$
as $K^{\mathscr U}$-algebras. 

Since $\mathcal F$ is finitely-generated, we may choose a finitely-generated 
$K^\mathscr U$-subalgebra $B^\prime$ of $B^{\mathscr U}$ such that $\mathcal F$ is a subalgebra
of $(A \otimes_K K^{\mathscr U}) \otimes_{K^{\mathscr U}} B^\prime$.
Since $B$ is PI, $B^{\mathscr U}$ is PI, and $B^\prime$ is PI.
The Shirshov height theorem implies that $B^\prime$ has polynomial growth (or, in
other words, its Gelfand-Kirillov dimension is finite; see, for example, proof of 
Theorem 9.19 in \cite{belov-rowen}). As $A \otimes_K K^{\mathscr U}$ is 
finite-dimensional (over $K^{\mathscr U}$), the tensor product 
$(A \otimes_K K^{\mathscr U}) \otimes_{K^{\mathscr U}} B^\prime$ has polynomial growth
too. But this contradicts the fact that its subalgebra $\mathcal F$ has
exponential growth.
\end{proof}

Needless to say, this proof, unlike those in \cite{ps}, as well as all the proofs of the 
full-fledged Regev's $A\otimes B$, is absolutely non-constructive, as it uses existence 
of an ultrafilter, and, therefore, axiom of choice.

Along the same lines one may treat, at least in some particular cases,
a number of other well-known results from PI theory:
commutativity of an ordered PI algebra;
PIness of a finitely-graded algebra with PI 
``null component''; of an algebra with a group action whose fixed point subalgebra is PI;
of a localization of a PI algebra; of algebras with involution, etc.

Let us note yet another immediate application to a situation which resonates with the 
universal-algebraic setup of \S \ref{sect-univalg}. A number of authors considered
a property of \textit{definable principal congruences} in algebraic systems. For rings,
this amounts to saying that the property that an element $x$ of a ring belongs to the
(principal) ideal generated by an element $y$, is expressed as a first-order formula in 
free variables $x,y$ of the theory of rings. 
For an associative ring $R$, the fact that $Var(R)$ has
definable principal congruences, can be expressed as a certain formula of 
the universal theory of rings, evidently not satisfied in free associative rings 
(see \cite[Lemma 1]{simons}).  
From this and from the ring-theoretic version of Corollary \ref{no-ident} immediately 
follows that if $Var(R)$ has definable principal
congruences, then $R$ is PI, what is the main result of \cite{simons}, obtained there
with appeal to some deep result from PI theory.

One may try to apply the same reasoning to the known open problem: 
suppose an associative algebra $R$ is represented as the vector space sum of its subalgebras:
$R = A + B$. If $A$, $B$ are PI, is it true that $R$ is PI? (see \cite{1} and \cite{2} with a transitive closure of references 
therein). It is easy to see
that the operation of taking ultraproduct commutes with the operation of taking the
vector space sum: 
$R^{\mathscr U} = A^{\mathscr U} + B^{\mathscr U}$. Consequently, the question
can be reduced to the following one: is it possible that the vector space sum $A + B$
of two PI algebras can contain a free associative subalgebra? If $A + B$ is 
finitely-generated, the impossibility of this follows from the same growth argument 
as in the proof of Theorem \ref{regev}. The difficulty, however, lies in the fact that 
it is not clear how to reduce the situation to a finitely-generated one, as 
the multiplication between $A$ and $B$ can be intertwined in a complicated way.

\section{Application: algebras with the same identities}\label{same-ident}

In \cite[\S 5]{razmyslov} (see also \cite{shest-zaicev}), Kushkulei and 
Razmyslov obtained results stating that some classes of 
fi\-ni\-te-di\-men\-si\-o\-nal algebras (e.g., prime 
over an algebraically closed field) are uniquely determined by their identities.
Another result in this direction:

\begin{theorem}
Let $\mathscr P$ be a class of finite-dimensional algebras satisfying the following
conditions:
\begin{enumerate}
\item 
If $A, B\in \mathscr P$, $A$ and $B$ are defined over the same field, $A$ is a 
subalgebra of $B$, and $Var(A) = Var(B)$, then $A = B$.
\item 
$\mathscr P$ is closed under elementary equivalence in the first-order two-sorted theory of 
pairs (algebra over a field, field).
\item $\mathscr P$ contains all finite-dimensional prime algebras.
\end{enumerate}
Then $\mathscr P$ satisfies the following strengthening of condition (i):
if $A, B\in \mathscr P$, $A$ and $B$ are defined over the same field, and 
$Var(A) = Var(B)$, then $A \simeq B$.
\end{theorem}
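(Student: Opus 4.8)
The plan is to produce, from two algebras $A,B\in\mathscr P$ over the same field $K$ with $Var(A)=Var(B)$, a common overalgebra in which both embed and which itself lies in $\mathscr P$ with the same variety, so that condition (i) can be applied. The natural candidate for the bridge is a suitable ultrapower. First I would invoke Corollary \ref{no-ident} (more precisely the machinery behind Corollary \ref{cor}): since $Var(A)=Var(B)$, the two algebras satisfy exactly the same identities, hence have the same universal theory relative to their common variety, and one expects $B$ to embed in an ultrapower $A^{\mathscr U}$ of $A$ and symmetrically $A$ to embed in some ultrapower $B^{\mathscr V}$ of $B$. The key leverage is condition (iii): $\mathscr P$ contains all finite-dimensional prime algebras, and by Lemma \ref{zariski} the relevant relatively free algebra of $Var(A)$ is prime, so the embedding-into-ultrapower results of the previous sections are available to witness these mutual embeddings.

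Next I would convert the ultrapower into an object still inside $\mathscr P$ and still finite-dimensional. This is exactly where Corollary \ref{fin-dim} enters: because $A$ is finite-dimensional, $A^{\mathscr U}\simeq A\otimes_K K^{\mathscr U}$ as $K$-algebras. By condition (ii), $\mathscr P$ is closed under elementary equivalence in the two-sorted theory (algebra over a field, field), and the {\L}o\'s theorem says the pair $(A,K)$ is elementarily equivalent to $(A^{\mathscr U},K^{\mathscr U})$; hence $A^{\mathscr U}$, viewed as a $K^{\mathscr U}$-algebra, belongs to $\mathscr P$, and it is finite-dimensional over $K^{\mathscr U}$ with $Var(A^{\mathscr U})=Var(A)=Var(B)$. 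The strategy is then to have $B$ embed into this finite-dimensional member of $\mathscr P$; applying condition (i) over the common field $K^{\mathscr U}$ to the inclusion $B^{\mathscr U}\hookrightarrow A^{\mathscr U}$ (or the appropriate chain) would force equality at the ultrapower level, from which $A\simeq B$ over $K$ should be recovered by comparing dimensions, since $\dim_K A=\dim_{K^{\mathscr U}}A^{\mathscr U}$ and likewise for $B$.

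The main obstacle I anticipate is arranging the embeddings so that both ends live over the \emph{same} field, as condition (i) demands. The mutual embeddings $B\hookrightarrow A^{\mathscr U}$ and $A\hookrightarrow B^{\mathscr V}$ pass through possibly different ultrapower fields $K^{\mathscr U}$ and $K^{\mathscr V}$, and a naive concatenation lands in a doubly-iterated ultrapower over $K^{\mathscr U\times\mathscr V}$ or similar, breaking the clean comparison. I would handle this by passing to a single sufficiently saturated ultrapower $K^{\mathscr U}$ of $K$ (or invoking the Keisler--Shelah / Keisler ultrapower theorem, already cited in the remarks after Corollary \ref{no-ident-universal}) so that $B^{\mathscr U}$ embeds into $A^{\mathscr U}$ as $K^{\mathscr U}$-algebras with matching variety; then condition (i) yields $B^{\mathscr U}=A^{\mathscr U}$ inside this common overalgebra. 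The final dimension-counting step is then routine, using that finite dimension is preserved under ultrapowers via Corollary \ref{fin-dim}, so that equality of the $K^{\mathscr U}$-algebras forces $\dim_K A=\dim_K B$ and, together with the embedding, the isomorphism $A\simeq B$.
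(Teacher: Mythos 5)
The central step of your proposal---that $Var(A)=Var(B)$ yields mutual embeddings $B\hookrightarrow A^{\mathscr U}$ and $A\hookrightarrow B^{\mathscr V}$---is a genuine gap: nothing in the paper gives this, and it is false in general. Embeddability of $B$ into an ultrapower of $A$ is equivalent to the inclusion of universal theories $Th_\forall(A)\subseteq Th_\forall(B)$ (Remark (2) after Corollary \ref{no-ident-universal}), whereas $Var(A)=Var(B)$ only says the two algebras have the same \emph{equational} theory, a far weaker condition. Concretely, take $A=M_2(K)$ and $B=M_2(K)\oplus M_2(K)$: these generate the same variety, but $B$ admits no $K$-algebra embedding into $A^{\mathscr U}\simeq M_2(K^{\mathscr U})$, since the images of $(1,0)$ and $(0,1)$ would be orthogonal nonzero idempotents, hence both of rank one, and then each copy of $M_2(K)$ would land in a commutative corner $eM_2(K^{\mathscr U})e=K^{\mathscr U}e$, contradicting injectivity. (This does not contradict the theorem, because such a $B$ is not prime and need not lie in $\mathscr P$; but your argument nowhere excludes it.) Citing Lemma \ref{zariski} and primeness of the relatively free algebra does not repair this: Corollary \ref{cor} embeds the \emph{free} algebra of $Var(A)$ into an ultrapower of $A$, and says nothing about embedding $B$ there. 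In fact, establishing $B\hookrightarrow A^{\mathscr U}$ for members of $\mathscr P$ with equal varieties is essentially the content of the theorem, so taking it as a starting point is close to circular.

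The paper's proof uses the free algebra not as a ``witness'' for such embeddings but as the actual bridge to which condition (i) is applied. Let $\mathcal F$ be the free algebra of the common variety; by Lemma \ref{zariski}, Corollary \ref{cor} and Corollary \ref{fin-dim} it embeds as a $K$-algebra into $A\otimes_K K^{\mathscr U}$. Its $K^{\mathscr U}$-span $K^{\mathscr U}\mathcal F$ is then a prime, finite-dimensional $K^{\mathscr U}$-algebra, so it lies in $\mathscr P$ by condition (iii)---this is where (iii) is genuinely needed, whereas in your proposal (iii) does no real work, a symptom of the misroute. Since $K^{\mathscr U}\mathcal F$ and $A\otimes_K K^{\mathscr U}$ (the latter in $\mathscr P$ by {\L}o\'s and (ii)) generate the same variety, condition (i) applied to this inclusion forces $K^{\mathscr U}\mathcal F=A\otimes_K K^{\mathscr U}$; the same argument for $B$ gives $K^{\mathscr U}\mathcal F\simeq B\otimes_K K^{\mathscr U}$, whence $A\otimes_K K^{\mathscr U}\simeq B\otimes_K K^{\mathscr U}$. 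Two smaller points: your idea of passing to a single sufficiently saturated or common ultrapower is a reasonable patch for aligning ultrafilters (a point the paper itself glosses over), but your final step cannot be ``dimension counting''---equal dimensions never yield an isomorphism. The descent from $A\otimes_K K^{\mathscr U}\simeq B\otimes_K K^{\mathscr U}$ to $A\simeq B$ works because isomorphism of two finite-dimensional algebras is a first-order condition on the field (existence of an invertible matrix matching the structure constants), combined with the elementarity of the extension $K\preceq K^{\mathscr U}$.
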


We stress that the base field over which algebras in the class $\mathscr P$ are 
defined, is not fixed.

\begin{proof}
Let $A, B\in \mathscr P$, both defined over a field $K$, be such that 
$Var(A) = Var(B)$. 
Lemma \ref{zariski}, Corollary \ref{cor} and Corollary \ref{fin-dim} imply that a
free algebra $\mathcal F$ in the variety $Var(A) = Var(B)$ embeds,
as a $K$-algebra, into an algebra $A \otimes K^{\mathscr U}$ for some 
ultrafilter $\mathscr U$. Hence $K^{\mathscr U} \mathcal F$ is isomorphic, as a 
$K^{\mathscr U}$-algebra, to a subalgebra of $A \otimes K^{\mathscr U}$. It is easy to see
that primeness of the $K$-algebra $\mathcal F$ implies primeness of the 
$K^{\mathscr U}$-algebra $K^{\mathscr U} \mathcal F$. 
By (iii), the latter algebra belong to $\mathscr P$, and
by the {\L}o\'s theorem and (ii), the $K^{\mathscr U}$-algebra 
$A \otimes_K K^{\mathscr U}$ belongs to $\mathscr P$.
Obviously, $K^{\mathscr U} \mathcal F$ satisfies over $K^{\mathscr U}$
the same identities as $A$ over $K$. Then by (i), 
$K^{\mathscr U} \mathcal F \simeq A \otimes_K K^{\mathscr U}$.
By the same reasoning, 
$K^{\mathscr U} \mathcal F \simeq B \otimes_K K^{\mathscr U}$.
Hence $A \otimes_K K^{\mathscr U} \simeq B \otimes_K K^{\mathscr U}$ as 
$K^{\mathscr U}$-algebras, and $A \simeq B$ as $K$-algebras.
\end{proof}

This theorem allows, for example, to obtain an alternative proof of results
of Kushkulei and Razmyslov in an important particular case of finite-dimensional simple Lie algebras
over an algebraically closed field of characteristic zero 
(see \cite[\S5, Corollary 1 and Comments]{razmyslov}). Indeed, the hypothesis
of the theorem is satisfied for such class of algebras: (i) can be proved
with the help of the well-known Dynkin's classification \cite{dynkin} of semisimple
subalgebras of semisimple Lie algebras, (ii) is evident, and (iii) follows from
the obvious fact that for finite-dimensional Lie algebras over a field of 
characteristic zero, simplicity is equivalent to primeness.

The same approach can be applied to finite-dimensional simple Jordan algebras
(which follows from the general results of Kushkulei and Razmyslov and also established independently
in \cite{drensky-racine}), as well as to graded simple associative algebras
(\cite{koshlukov-zaicev}).

Along the same lines one may treat varieties generated by affine Kac--Moody algebras.
Consider, for example, a Lie algebra of the form 
\begin{equation}\label{km}
\widehat{\mathfrak g} = (\mathfrak g \otimes_K K[t,t^{-1}]) \oplus Kz ,
\end{equation}
where $\mathfrak g$ is a split finite-dimensional
simple Lie algebra defined over a field $K$ of characteristic zero, 
$K[t,t^{-1}]$ is the algebra of Laurent polynomials,
$z$ is the central element, and the multiplication between elements of
$\mathfrak g \otimes K[t,t^{-1}]$ is twisted by the well-known $2$-cocycle:
$$
[x \otimes f, y \otimes g] = [x,y] \otimes fg + (x,y) Res (\frac{df}{dt} g) z
$$
where $x,y\in \mathfrak g$, $f,g\in K[t,t^{-1}]$, and $\form$ is the Killing form on 
$\mathfrak g$.

In \cite{zaicev} it is shown, among other, that 
$Var(\widehat{\mathfrak g}) = \Lbrack Var(\mathfrak g),\mathfrak E \Rbrack$, where 
$\mathfrak E$ is the variety consisting of the single zero algebra, and 
$\Lbrack\,\cdot\, , \cdot\,\Rbrack$ is the standard commutator
of varieties as defined in \cite[\S 4.3.8]{bahturin} (in other words, for a variety
$\mathfrak V$, $\Lbrack\mathfrak V, \mathfrak E\Rbrack$ is nothing but a variety defined by identities of the form 
$[f(x_1, \dots, x_n),x_{n+1}] = 0$, where $f(x_1, \dots, x_n) = 0$ is an identity in
$\mathfrak V$). Let us complement this result by showing that free algebras
in $Var(\widehat{\mathfrak g})$ embed in algebras whose structure closely 
resembles those of $\widehat{\mathfrak g}$.

By \cite[Corollary 2.2]{stewart}, every ideal of the Lie algebra 
$\mathfrak g \otimes K[t,t^{-1}]$ is of the form $\mathfrak g\otimes I$, where $I$
is an ideal of $K[t,t^{-1}]$. Since $K[t,t^{-1}]$ does not have zero divisors, it is
prime, hence $\mathfrak g \otimes K[t,t^{-1}]$ is prime, and $\widehat{\mathfrak g}$ is 
a central extension of a prime Lie algebra. 
By an obvious modification of the proof of Lemma \ref{zariski}, one get that 
for the free algebra $\mathcal L$ in $Var(\widehat{\mathfrak g})$ 
of countable rank, $\mathcal L/Z(\mathcal L)$ is prime, and, by Corollary \ref{cor} and Lemma \ref{otimes}, embeds in
$\mathfrak g \otimes K[t,t^{-1}]^{\mathscr U}$ for some ultrafilter $\mathscr U$.
From the results of \cite[\S 4.4]{bahturin} it follows that $\mathcal L$ embeds
in a central extension of $\mathfrak g \otimes K[t,t^{-1}]^{\mathscr U}$.
The latter, by \cite[Theorem 3.3 and Corollary 3.5]{kassel}, is described in terms
of the first-order cyclic homology of $K[t,t^{-1}]^{\mathscr U}$, so we get an embedding
$$
\mathcal L \>\hookrightarrow\> \Big(\mathfrak g \otimes_{K^{\mathscr U}} K[t,t^{-1}]^{\mathscr U}\Big) 
\oplus HC_1(K[t,t^{-1}]^{\mathscr U}) .
$$
The multiplication in the right-hand side Lie algebra is defined by the formula
$$
[x \otimes F, y \otimes G] = [x,y] \otimes FG + (x,y) \overline{F \wedge G} ,
$$
where $x,y\in \mathfrak g$, $F,G \in K[t,t^{-1}]^{\mathscr U}$, and 
$\overline{F \wedge G}$ denotes the corresponding homology class in \newline
$HC_1(K[t,t^{-1}]^{\mathscr U})$.

The addition to (\ref{km}) of the $Kt\frac{d}{dt}$ term, or twisting by automorphisms
of $\mathfrak g$, do not significantly change the picture, and can be treated in
the same way.

Similarly, one may treat varieties generated by modular semisimple Lie algebras. According
to the classical Block theorem, a typical finite-dimensional semisimple Lie
algebra over a field $K$ of characteristic $p$ which is not isomorphic to the sum
of simple ones, has the form 
$$
\Big(S \otimes_K K[t_1, \dots, t_n]/(t_1^p, \dots, t_n^p)\Big) \oplus 
\Big(1\otimes_K D\Big) ,
$$
where $S$ is a simple Lie algebra, $K[t_1, \dots, t_n]/(t_1^p, \dots, t_n^p)$ is the
reduced polynomial algebra, and $D$ is a derivation algebra of 
$K[t_1, \dots, t_n]/(t_1^p, \dots, t_n^p)$ such that the latter does not have
$D$-invariant ideals.

To such algebras, Lemma \ref{zariski} is applicable, and, as in the Kac--Moody case,
further application of Corollary \ref{cor} and Lemma \ref{otimes} gives an embedding
(as $K^{\mathscr U}$-algebras)
$$
\mathcal L \>\hookrightarrow\> 
\Big(S \otimes_{K^{\mathscr U}} K^{\mathscr U}[t_1, \dots, t_n]/(t_1^p, \dots, t_n^p)\Big) 
\oplus \Big(1 \otimes_{K^{\mathscr U}} (D \otimes_K K^{\mathscr U})\Big) 
$$
for some ultrafilter $\mathscr U$.

\section{Application: Tarski's monsters}\label{tarski}

Under \textit{Tarski's monster of type $p$}, $p$ being a prime (respectively, 
\textit{of type $\infty$}) we understand an infinite nonabelian group
all whose proper subgroups are cyclic of order $p$ (respectively, of infinite 
order). Such groups were constructed, among other groups
with exotic-looking restrictions on subgroups, by Olshanskii in the framework of his 
celebrated machinery of geometrically-motivated manipulations with group presentations
(see, for example, \cite[Chapter 9, \S 28.1]{olsh-book}). In Olshanskii's works,
the existence of Tarski's monsters of type $p$ is established for $p > 10^{75}$. 
Later, in \cite{al} this estimate has been reduced to $p > 1003$.

Let $G$ be a finitely-generated group, and
\begin{equation}\label{present}
\{1\} \to \mathcal N \to \mathcal F \to G \to \{1\}
\end{equation}
its presentation, where $\mathcal F$ is a free group of finite rank, and 
$\mathcal N$ is a normal subgroup of relations. The \textit{girth} of the presentation
(\ref{present}) is the minimal length of elements of $\mathcal N$, i.e., the minimal
length of relations between the chosen generators of $G$ (or, in other words,
the minimal length of a simple loop in the corresponding Cayley graph). 
The \textit{girth} of $G$ is the supremum of girths of all its presentations with a finite
number of generators. This natural notion was 
introduced and studied recently by Akhmedov in \cite{akhmedov-1} and \cite{akhmedov-2},
and by Schleimer in \cite{schleimer}.
One of the interesting questions arising in that regard is to construct groups 
of infinite girth.

As noted in the above-mentioned works, a group satisfying a nontrivial identity 
cannot have an infinite girth. To circumvent this obstacle, let us introduce the 
notion of \textit{relative girth} -- a girth relative to all identities a group 
satisfies: in the definition of girth above, replace in (\ref{present}) the 
absolutely free group $\mathcal F$ by a free group (of finite rank) in the variety 
$Var(G)$.

\begin{theorem}\label{monster}\hfill
\begin{enumerate}
\item 
A Tarski's monster of type $p$ does not satisfy any nontrivial identity
except $x^p = 1$ and its consequences, if and only if it has infinite relative girth.
\item
A Tarski's monster of type $\infty$ does not satisfy any nontrivial identity
if and only if it has infinite girth.
\end{enumerate}
\end{theorem}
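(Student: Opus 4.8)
The plan is to reduce both statements to Corollary~\ref{cor2}, using the one feature peculiar to a Tarski's monster $G$: any two \emph{non-commuting} elements generate the whole of $G$. Indeed, if $a,b$ do not commute then $\langle a,b\rangle$ is non-abelian, hence not one of the (cyclic) proper subgroups, so $\langle a,b\rangle=G$; in particular $G$ is $2$-generated, and any presentation whose generators do not all commute may be cut down to a two-generator one. The ambient variety is the variety of all groups in the type-$\infty$ case (where the girth is absolute) and the Burnside variety $\mathfrak B_p$ given by $x^p=1$ in the type-$p$ case (where the girth is taken relative to $Var(G)$). In both, the relatively free groups satisfy the hypothesis of Lemma~\ref{p} — abelian subgroups are infinite cyclic by Nielsen--Schreier, resp. cyclic of order $p$ by Adian for $p\ge 673$ — so Corollary~\ref{cor2} applies to $G$.

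The technical core I would isolate is a dictionary: \emph{$G$ has infinite (relative) girth if and only if the rank-$2$ free group $\mathcal F_2$ of the ambient variety embeds into some ultrapower $G^{\mathscr U}$} (here $\mathcal F_2=F_2$ in the type-$\infty$ case and $\mathcal F_2=B(2,p)$ in the type-$p$ case). For ($\Leftarrow$) I would fix such an embedding, sending the free generators to classes of sequences $(a_i),(b_i)$; given $L$, the finitely many nontrivial words $w$ of length $\le L$ each satisfy $\{i:w(a_i,b_i)\ne 1\}\in\mathscr U$, so by the finite intersection property some index $i$ gives a pair $a_i,b_i$ with $w(a_i,b_i)\ne 1$ for all such $w$. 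Taking $w=[x,y]$ shows $a_i,b_i$ do not commute, hence generate $G$, and the resulting two-generator presentation has shortest relation of length $>L$; as $L$ is arbitrary, the girth is infinite. For ($\Rightarrow$) I would, from presentations of girth $>L$, select two non-commuting generators $a_L,b_L$ (they exist since $G$ is non-abelian, and they generate $G$); each fixed nontrivial word $w$ then satisfies $w(a_L,b_L)\ne 1$ for all large $L$, so along any non-principal ultrafilter on the set of $L$'s the map $x\mapsto(a_L),\,y\mapsto(b_L)$ is injective and embeds $\mathcal F_2$ into $G^{\mathscr U}$.

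With the dictionary in hand the forward implications are immediate. In the type-$\infty$ case, ``$G$ satisfies no nontrivial identity'' means $Var(G)$ is the variety of all groups, so by Corollary~\ref{cor2} $F_2$ embeds into $G^{\mathscr U}$ and the girth is infinite; in the type-$p$ case, ``no identity beyond $x^p=1$ and its consequences'' means $Var(G)=\mathfrak B_p$, so $B(2,p)$ embeds into $G^{\mathscr U}$ and the relative girth is infinite (it suffices to use rank $2$, by the remarks following Corollary~\ref{cor2}). The converse for part~(2) is the quoted fact that a group obeying a nontrivial law has finite (absolute) girth.

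The step I expect to be the main obstacle is the converse of part~(1): infinite relative girth must force $Var(G)=\mathfrak B_p$. By the dictionary it yields an embedding $\mathcal F_2^{Var(G)}\hookrightarrow G^{\mathscr U}$, and since $G^{\mathscr U}\equiv G$ by {\L}o\'s, the relatively free group $\mathcal F_2^{Var(G)}$ inherits every universal sentence true in $G$ — in particular the first-order Tarski property $\forall x\,\forall y\,\big([x,y]=1\wedge x\ne 1\to\bigvee_{k=0}^{p-1} y=x^{k}\big)$, so that all its abelian subgroups are cyclic of order $p$. What remains, and is the crux, is to deduce from this that $Var(G)$ cannot be a proper subvariety of $\mathfrak B_p$; equivalently, to show that the rank-$2$ relatively free group of any proper subvariety of $\mathfrak B_p$ already contains a copy of $(\mathbb Z/p)^2$, contradicting the transferred property. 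This is a genuinely group-theoretic statement about the lattice of Burnside subvarieties and, like the existence of the monsters themselves, I expect it to rest on the Novikov--Adian machinery; the surrounding ultrafilter and {\L}o\'s formalism is soft by comparison.
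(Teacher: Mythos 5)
Your forward direction (no identities beyond those of the ambient variety $\Rightarrow$ infinite (relative) girth) is correct and is essentially the paper's own argument: Corollary \ref{cor2} plus the key Tarski-monster fact that a non-commuting pair generates the whole group. Whether one extracts the generating pairs from an ultrapower via the finite intersection property of the ultrafilter (as you do) or from an elementarily equivalent group via the first-order ``initial piece of the free group'' sentences (as the paper does) is immaterial; by {\L}o\'s these are the same mechanism. Likewise your converse for part (ii) -- a group with a nontrivial law has finite absolute girth -- is exactly what the paper invokes.

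The genuine gap is the converse of part (i), which you explicitly leave unproven and reduce to the claim that the rank-$2$ relatively free group of every proper subvariety of $\mathfrak B_p$ contains a copy of $(\mathbb Z/p)^2$. That reduction is a dead end: it is far deeper than anything the paper uses and, to my knowledge, is not settled by Novikov--Adian theory (it is entangled with the structure of the subvariety lattice of $\mathfrak B_p$, e.g.\ with whether proper subvarieties are locally finite, which is not known territory). The paper does not go there at all; it treats this direction as obvious, and it is obvious once the relative girth of a type-$p$ monster is measured over Burnside presentations $\{1\} \to \mathcal N \to B(k,p) \to G \to \{1\}$ -- precisely the presentations its forward construction produces, since under the hypothesis of part (i) one has $Var(G)=\mathfrak B_p$. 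Indeed, if $G$ satisfies an identity $w=1$ that is not a consequence of $x^p=1$, then $w\neq 1$ in free Burnside groups, and substituting fixed words in two generators (using \v{S}irvanjan's embedding $B(\infty,p)\hookrightarrow B(2,p)$ of \cite{shirv} to guarantee a nontrivial instance in any rank $\ge 2$) turns $w$ into a relation of length bounded independently of the presentation; hence the girth relative to $x^p=1$ is finite. Your difficulty is an artifact of reading ``relative girth'' literally as girth relative to $Var(G)$ even when $Var(G)\subsetneq \mathfrak B_p$; under that reading the statement you need is unavailable, which signals that the interpretation, not the group theory, is what has to change. As written, your proposal proves part (ii) but only one direction of part (i).
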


\begin{remark}[A. Olshanskii]
Tarski's monsters satisfying condition (ii) of the theorem do exist 
(and, moreover, there is an abundance of them), and they can be constructed in the 
following way.

According to \cite[Corollary 1]{residualing}, each non-cyclic torsion-free hyperbolic
group $G_0$ has a homomorphic image $G$ which is a Tarski's monster of type $\infty$.
Such monsters are constructed by subsequent applications of \cite[Theorem 2]{residualing},
as the direct limit of a system of surjective maps of groups $G_0 \to G_1 \to \dots$.
Each $G_n$ is a non-cyclic torsion-free hyperbolic group (and hence contains a 
free subgroup of countable rank), and is $2$-generated for 
$n\ge 1$. Let us denote, by abuse of notation,
the corresponding generators by the same letters $a,b$ 
(so, $a,b\in G_n$ are images of $a,b\in G_{n-1}$), and each $G_n$ is obtained from 
$G_{n-1}$ by adding additional relations between these two generators.
Also, the injectivity radius of each surjection $G_{n-1} \to G_n$ (i.e., the maximal 
number $r$ such that the map is injective on all words of length $\le r$) can be chosen 
to be arbitrarily large. 

Enumerate all the non-trivial words in the free group of countable rank $\mathcal F$ as 
$v_1, v_2, \dots$. Since each $G_n$ contains a copy of $\mathcal F$, there are
elements in $G_n$ such that the value of $v_n$ on these elements is different from $1$.
Writing these elements in terms of the generators $a,b$, we get
\begin{equation}\label{w}
w_n(a,b) = v_n (w_{n1}(a,b), w_{n2}(a,b), \dots) \ne 1 \text{ in } G_n
\end{equation}
for some (finite number of) words $w_n, w_{n1}, w_{n2}, \dots$.

Now, on each step choose the injectivity radius of the surjection $G_n \to G_{n+1}$ 
larger than the length of all words $w_1, \dots, w_n$ constructed on the previous steps.
Consequently, (\ref{w}) holds in all groups $G_{n+1}, G_{n+2}, \dots$, 
and hence in the limit group $G$. This implies that $v_n = 1$, for any $n$, cannot be an 
identity of $G$.
\end{remark}

In \cite{garion}, the absence of nontrivial identities in a Tarski's monster $G$
of type $\infty$ is characterized in terms of an action of the group of outer 
automorphisms of a free group of rank $n$ on $n$-tuples of $G$.

\begin{question}
Prove existence of Tarski's monsters satisfying condition (i) of Theorem \ref{monster}.
\end{question}

\begin{proof}[Proof of Theorem \ref{monster}]
The ``only if'' part is obvious, so let us prove the ``if'' part.
Let $G$ be Tarski's monster either of type $p$ which does not satisfy any nontrivial 
identity except $x^p = 1$ and its consequences, or of type $\infty$ 
which does not satisfy a nontrivial identity.
By Corollary \ref{cor2}, a group elementarily equivalent to $G$ contains a subgroup 
isomorphic to a relatively free subgroup $\mathcal G$ of rank $2$ (which is the free 
Burnside group $B(2,p)$ in the case (i), or the free group in the case (ii)). 
Let $x,y$ be the free generators of $\mathcal G$, and
$$
\{w_1(x,y) = x, w_2(x,y) = x^{-1}, w_3(x,y) = y, w_4(x,y) = y^{-1}, \dots, w_{k_n}(x,y)\}
$$
the set of all words of $\mathcal G$ of length $\le n$. 
The existence of this ``initial piece of $\mathcal G$ of 
length $n$'' can be written as the first-order property:
$$
\exists x \> \exists y: \bigwedge_{1 \le i < j \le k_n} w_i(x,y) \ne w_j(x,y) .
$$
Consequently, for each $n\in \mathbb N$, the same first-order formula holds in $G$;
let $x_n, y_n\in G$ be the corresponding elements. Obviously,
$x_n, y_n$ do not commute except, possibly, for some small values of $n$, and, therefore,
generate $G$. This provides a presentation of $G$ of (relative) girth $>n$.
\end{proof}

Note another interesting consequence of Theorem \ref{monster}.

The \textit{growth sequence} of a group $G$ is a sequence whose $n$th term equal to the 
(minimal) number of generators of the $n$th fold direct power of $G$.
See \cite{wise} for a brief history of the subject and further references.
In particular, in a number of works, including \cite{wise}, a considerable effort was put
into construction of groups whose growth sequence is constant, each term is equal to $2$.
Theorem \ref{monster} provides further such examples, in view of the following general 
elementary fact:

\begin{lemma}\label{univ}
If a finitely-generated simple group $G$ has infinite relative girth, then its growth 
sequence is constant, each term is equal to the minimal number of generators of $G$.
\end{lemma}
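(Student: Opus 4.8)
The plan is to show that $d(G^k)=d(G)$ for every $k$, where $d(G)$ is the minimal number of generators; write $d:=d(G)$. One inequality is free: each coordinate projection $G^k\to G$ is onto, so $d(G^k)\ge d(G)$, and the growth sequence is non-decreasing since $G^k$ is a quotient of $G^{k+1}$. Hence it suffices to produce, for each $k$, a set of $d$ elements generating $G^k$: the sequence is then squeezed between its value $d$ at $k=1$ and the bound $d$, so it is constantly $d$. Throughout I assume $G$ is non-abelian: a genuine infinite simple group is forced here anyway (finite groups have finite relative girth), and the abelian simple groups $\mathbb{Z}/p$ must be excluded, since they have infinite relative girth but growth sequence $k\mapsto k$.

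The main tool will be the structure of subdirect products of a non-abelian simple group. Given $k$ generating $d$-tuples $\mathbf g^{(1)},\dots,\mathbf g^{(k)}$ of $G$, with $\mathbf g^{(j)}=(g_1^{(j)},\dots,g_d^{(j)})$, I form the $d$ elements $u_i=(g_i^{(1)},\dots,g_i^{(k)})\in G^k$ and set $H=\langle u_1,\dots,u_d\rangle$. Every projection of $H$ to a factor is all of $G$, so $H$ is subdirect; by the Goursat--Remak classification of subdirect products of a non-abelian simple group, $H=G^k$ if and only if no two coordinates $j\ne j'$ are \emph{linked}, i.e. there is no $\alpha\in \mathrm{Aut}(G)$ with $\alpha(g_i^{(j)})=g_i^{(j')}$ for all $i$ (the linking relation is an equivalence relation and $H$ is the product of the diagonals over its blocks, so $H=G^k$ exactly when all blocks are singletons). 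Thus $G^k$ is $d$-generated as soon as $G$ admits $k$ generating $d$-tuples pairwise inequivalent under the diagonal action of $\mathrm{Aut}(G)$, and the whole lemma reduces to exhibiting \emph{infinitely many} pairwise $\mathrm{Aut}(G)$-inequivalent generating $d$-tuples.

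To produce these I would pass to relators. A generating $d$-tuple $\mathbf g$ determines the surjection $F_d(Var(G))\to G$ sending the free generators to $\mathbf g$, with kernel $\mathcal N_{\mathbf g}$; two tuples are $\mathrm{Aut}(G)$-equivalent precisely when $\mathcal N_{\mathbf g}=\mathcal N_{\mathbf g'}$, so inequivalent tuples correspond bijectively to distinct normal subgroups $\mathcal N\trianglelefteq F_d(Var(G))$ with quotient $\cong G$, and the girth of the presentation is exactly the minimal length of a nontrivial element of $\mathcal N_{\mathbf g}$. Infinite relative girth supplies generating $d$-tuples of arbitrarily large girth; meanwhile $G$, being simple and non-abelian, has $d\ge 2$, whereas a relatively free group of rank $\ge 2$ is never simple (killing all but one free generator is a proper surjection with nontrivial kernel), so $G$ is not relatively free, each $\mathcal N_{\mathbf g}$ is nontrivial, and each girth is finite. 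An unbounded sequence of finite girths takes infinitely many values, forcing infinitely many distinct kernels $\mathcal N$, hence infinitely many $\mathrm{Aut}(G)$-orbits of generating $d$-tuples. Selecting representatives of $k$ of them and feeding them into the subdirect-product construction would yield $d$ generators of $G^k$, finishing the argument.

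The crux, I expect, is the step used implicitly above: that infinite relative girth is \emph{witnessed at the minimal number of generators} $d=d(G)$. The definition takes a supremum over all finite generating sets, and a priori the value $\infty$ might be forced only by generating sets of size $>d(G)$; if there were only finitely many $\mathrm{Aut}(G)$-orbits of generating $d$-tuples, their finitely many girths would be bounded, and the above would give merely a bound $d(G^k)\le D$ with $D>d(G)$, not constancy. For the groups of interest this obstruction disappears: a Tarski monster is $2$-generated, \emph{any} two non-commuting elements generate it, and the construction in the proof of Theorem \ref{monster} yields, for every $n$, a pair $x_n,y_n$ admitting no relation of length $\le n$ — that is, minimal ($2$-generator) presentations of arbitrarily large girth. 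Once minimal-generation witnessing is secured, the reduction and the kernel/girth counting apply verbatim.
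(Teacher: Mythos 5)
Your proof is correct (under the non-abelian hypothesis you add, on which more below), but it follows a genuinely different route from the paper's. Both arguments reduce to the same input: infinitely many normal subgroups $\mathcal N_i$ of the relatively free group $\mathcal G$ of rank $d=d(G)$ in $Var(G)$ with $\mathcal G/\mathcal N_i\simeq G$ and girth tending to infinity --- which, in your language, is exactly the statement that there are infinitely many $\mathrm{Aut}(G)$-orbits of generating $d$-tuples, since two tuples are equivalent precisely when their kernels coincide. From there the paper works entirely inside $\mathcal G$, by induction on $k$: assuming $\mathcal G/(\mathcal N_{i_1}\cap\dots\cap\mathcal N_{i_k})\simeq G^k$, it uses the unbounded-length condition (which forces $\bigcap_{i>i_k}\mathcal N_i=\{1\}$) to find $\mathcal N_{i_{k+1}}$ not containing the current intersection; simplicity of $G$ makes $\mathcal N_{i_{k+1}}$ a maximal normal subgroup, so $\mathcal G=(\mathcal N_{i_1}\cap\dots\cap\mathcal N_{i_k})\mathcal N_{i_{k+1}}$ and the quotient splits off one more direct factor of $G$. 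This is elementary and self-contained: no automorphisms, no structure theory of subgroups of $G^k$. You instead invoke the Goursat--Remak classification of subdirect products of powers of a non-abelian simple group. Your route is conceptually transparent and needs only pairwise \emph{distinctness} of the kernels rather than the length condition in full strength (it is the standard Hall--Wiegold mechanism behind growth sequences of simple groups), but it pays with a heavier black box whose proof itself consumes non-abelian simplicity, which the paper's intersection argument avoids. Two of your side remarks are genuine improvements on the paper: the lemma as literally stated fails for $G=\mathbb Z/p$ (it is relatively free, hence of infinite relative girth under the natural convention, with growth sequence $k\mapsto k$), and the paper's ``obvious'' claim that $\mathcal N_{i_1}\cap\dots\cap\mathcal N_{i_k}\neq\{1\}$ is exactly where non-abelianness is silently used; likewise, the ``witnessed at minimal rank'' issue you call the crux is also passed over by the paper, whose proof simply asserts that infinite relative girth produces the $\mathcal N_i$ at rank $d(G)$ --- your resolution for Tarski monsters (any non-commuting pair generates, so large girth at any rank descends to rank $2$) is precisely what the application in Theorem \ref{monster} requires. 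One small slip: your parenthetical ``finite groups have finite relative girth'' contradicts your own correct observation about $\mathbb Z/p$ two lines later; you mean finite \emph{non-abelian} groups, and nothing in your argument depends on that claim.
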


\begin{proof}
Let $n$ be the minimal number of generators of $G$.
Infinity of the relative girth of $G$ means that there is an infinite sequence
$\mathcal N_1, \mathcal N_2, \dots$ of normal subgroups of the free group 
$\mathcal G$ in $Var(G)$ of rank $n$ such that for every $i$
the length of each word in $\mathcal N_i$ is $\ge i$, and 
$\mathcal G/\mathcal N_i \simeq G$.

Let us prove by induction that for each $k\in \mathbb N$ there is a sequence
$i_1 < i_2 < \dots < i_k$ such that
\begin{equation}\label{step}
\mathcal G/(\mathcal N_{i_1} \cap \dots \cap \mathcal N_{i_k}) 
\simeq 
G \times G \times \dots \times G \quad (k \text{ times}) .
\end{equation}
For $k=1$ we may take $i_1 = 1$. Suppose that for some $k > 1$ the isomorphism 
(\ref{step}) holds. It is obvious that 
$\mathcal N_{i_1} \cap \dots \cap \mathcal N_{i_k} \ne \{1\}$.
On the other hand, since $\bigcap_{i > i_k} \mathcal N_i = \{1\}$, there is 
$i_{k+1} > i_k$ such that 
$$
\mathcal N_{i_1} \cap \dots \cap \mathcal N_{i_k} \not\subseteq
\mathcal N_{i_{k+1}} .
$$
Since $G$ is simple, $\mathcal N_{i_{k+1}}$ is a maximal normal subgroup in $\mathcal G$,
and 
$$
\mathcal G = 
(\mathcal N_{i_1} \cap \dots \cap \mathcal N_{i_k}) 
\mathcal N_{i_{k+1}} .
$$
Then:
\begin{multline*}
\mathcal G/(\mathcal N_{i_1} \cap \dots \cap \mathcal N_{i_k} \cap \mathcal N_{i_{k+1}}) 
\simeq 
\mathcal G/(\mathcal N_{i_1} \cap \dots \cap \mathcal N_{i_k}) \times 
\mathcal G/\mathcal N_{k+1} \\ \simeq 
G \times \dots \times G \quad (k+1 \text{ times}) .
\end{multline*}
It follows from (\ref{step}) that each finite direct power of $G$ is $n$-generated.
\end{proof}

In fact, nothing in this proof is specific to groups: the corresponding statement can
be formulated for general algebraic systems; in particular, it holds also for algebras.

Lemma \ref{univ} implies that the growth sequence of Tarski's monsters 
satisfying conditions of Theorem \ref{monster} is constant, each term is equal 
to $2$. Note that in \cite[Theorem 8]{garion} it is proved, by different methods,
that each term in the growth sequence of \textit{any} Tarski's monster 
is $\le 3$, and for any Tarski's monster of type $\infty$ it is equal to $2$.

\section{Homomorphisms of direct products}\label{dual}

The following is, in a sense, dual version of Theorem \ref{ultra},
with embeddings replaced by surjective homomorphisms, with, essentially, the
same proof\footnote[2]{
Added August 22, 2015: the proof of Theorem \ref{th-dual} is in error. For a 
corrected statement, with a different proof, see a sequel to this article,
\emph{On the utility of Robinson--Amitsur ultrafilters. II}.
}.

\begin{theorem}\label{th-dual}
Let $\{ B_i \}_{i\in \mathfrak I}$ be a set of algebraic systems from an 
ideal-determined class. 
If a finitely subdirectly irreducible algebraic system $A$ is a surjective 
homomorphic image of the direct product $\prod_{i\in \mathfrak I} B_i$, then there 
is an ultrafilter $\mathscr U$ on the set $\mathfrak I$ such that $A$ is a
surjective homomorphic image of the ultraproduct $\prod_{\mathscr U} B_i$.
\end{theorem}

\begin{proof}
Let $\alpha: \prod_{i\in \mathfrak I} B_i \to A$ be a given surjective 
homomorphism. Define 
$$
\mathscr S = 
\set{\set{i\in \mathfrak I}{f(i)\ne e}}
{f\in \prod_{i\in \mathfrak I} B_i, \>\alpha(f) \ne e} .
$$
Let us verify that intersection of any two elements of $\mathscr S$ 
contains an element of $\mathscr S$. 
Let $S,T\in \mathscr S$, say, $S = \set{i\in \mathfrak I}{f(i)\ne e}$
and $T = \set{i\in \mathfrak I}{g(i)\ne e}$ for some 
$f, g\in \prod_{i\in \mathfrak I} B_i$ not vanishing under $\alpha$.
Since $A$ is finitely subdirectly irreducible, it contains an element $a\ne e$
belonging to the intersection of ideals generated by $\{\alpha(f)\}$ and 
$\{\alpha(g)\}$. Take some element $u\in \prod_{i\in \mathfrak I} B_i$ from 
the preimage $\alpha^{-1}(a)$.
Let $i\in \mathfrak I$ such that $f(i) = e$.
Since $\alpha(u) = a = t(b_1, \dots, b_n, \alpha(f), \dots, \alpha(f))$ for some
ideal term $t$ and $b_1, \dots, b_n\in A$, we have
$u = t(h_1, \dots, h_n, f, \dots, f)$ for some 
$h_1\in \alpha^{-1}(b_1), \dots, h_n\in \alpha^{-1}(b_n)$, and
\begin{multline*}
u(i) = t(h_1, \dots, h_n, f, \dots, f)(i) = 
t(h_1(i), \dots, h_n(i), f(i), \dots, f(i)) \\ = t(h_1(i), \dots, h_n(i), e, \dots, e)
= e .
\end{multline*}
Coupling this with a similar assertion for $g$, we get that
$$
S \cap T \supset \set{i\in \mathfrak I}{u(i) \ne e} \in \mathscr S.
$$
 
Thus $\mathscr S$ satisfies the finite intersection property and is contained in
some ultrafilter $\mathscr U$ on $\mathfrak I$. 
If $f \in \mathcal I\Big(\prod_{i\in \mathfrak I} B_i, \mathscr U\Big)$,
then $\set{i\in \mathfrak I}{f(i) = e} \in \mathscr U$, and, since
$\mathscr U$ is ultrafilter, 
$\set{i\in \mathfrak I}{f(i) \ne e} \notin \mathscr U$, 
and hence $\set{i\in \mathfrak I}{f(i) \ne e} \notin \mathscr S$ and 
$\alpha(f) = e$.
This shows that $\mathcal I\Big(\prod_{i\in \mathfrak I} B_i, \mathscr U\Big)$ 
lies in $\Ker\alpha$, and the map $\alpha$ factors through the ultraproduct
$
\Big(\prod_{i\in \mathfrak I} B_i\Big) \Big/ \mathcal I\Big(\prod_{i\in \mathfrak I} B_i, \mathscr U\Big)
= \prod_{\mathscr U} B_i$.
\end{proof}

In \cite[\S 5.3]{zilber}, it is asked whether an ultraproduct of finite groups
can be mapped surjectively to a compact simple Lie group $G$, in particular,
$\SO(3)$. According to Theorem \ref{th-dual},
this is equivalent to the question whether a direct product of
finite groups can be mapped surjectively to $G$.

\begin{question}
What can be said about the class of groups obtained by closure of finite groups
with respect to direct products and homomorphic images? Which simple groups, 
matrix groups, Lie groups, lie in this class?
\end{question}

A question when a homomorphic image of a direct product of groups or algebras
can be factored through an ultraproduct, or, more generally, through a product 
of a finite number of ultraproducts, was studied also in the recent interesting 
papers \cite{bergman} and \cite{bergman-nahlus}. The emphasis there is, however,
different: one imposes various conditions 
(such as simplicity, solvability, or nilpotency) not on $A$ but on $B_i$'s.

\section{Further speculations}

Here we indicate some of our initial reasons for looking into all this, of a 
highly speculative character.

\subsection{Lie-algebraic monsters}

A problem of existence of Lie-algebraic analogs of Tarski's monsters -- namely, 
of in\-fin\-ite-di\-men\-si\-o\-nal Lie algebras all whose proper subalgebras 
are one-di\-men\-si\-o\-nal -- is, arguably, one of the most difficult problems in the 
abstract theory of infinite-dimensional Lie algebras (see, for example,
\cite{gein} and \cite{lashi}). For such hypothetical
Lie algebras which do not satisfy a nontrivial identity, an analog of 
Theorem \ref{monster} would hold.

\begin{question}
Study the notion of (relative) girth for Lie algebras.
\end{question}

A simpler, but still open problem concerns \textit{finite-dimensional} 
Lie algebras all whose proper subalgebras are one-dimensional (called
\textit{$L_1$-algebras} in the sequel). 
Over perfect fields, every $L_1$-algebra has dimension $2$ or $3$.
In characteristic zero this is an immediate consequence of simple reasonings 
about forms of algebras and the classical structure theory, and in the case of 
the perfect base field of characteristic $p>0$ this follows from the powerful 
results of Premet \cite{premet} (for $p>5$) and \cite{premet-p5} (for $p=5$). 
See, for example, \cite[Proposition 1]{gein} or \cite[Proposition 3.1]{lashi} 
for details. 

Over non-perfect fields, we do not have the usual tools of Galois theory
to handle forms of algebras, as in \cite{premet}, and, to quote \cite{varea}, 
``the existence [of an $L_1$-algebra] of dimension greater than $3$
is an interesting open problem''.
Using reasonings similar to those in \S \ref{sect-alg}, it is possible to prove 
that a prime finite-dimensional Lie algebra over an infinite field, all whose
abelian subalgebras are $1$-dimensional, satisfies a functional identity of the 
form
\begin{equation}\label{quasi}
\lambda(x,y) u(x,y) + \mu(x,y) v(x,y) = 0 ,
\end{equation}
where $u$ and $v$ are certain Lie words in $2$ variables, and 
$\lambda$ and $\mu$ are elements of the base field, depending, in general, 
on the elements $x$ and $y$ of the algebra.
We conjecture that no simple finite-dimensional Lie algebra of dimension $>3$
can satisfy a nontrivial functional identity of the form (\ref{quasi}). 
If this conjecture is true, it will prove the absence of $L_1$-algebras of 
dimension $>3$ over an arbitrary base field.

\subsection{The Tits alternative}

Another reason was the desire to provide an alternative proof of the celebrated Tits 
alternative, or for one of its not less celebrated consequences, such as growth dichotomy for
linear group.
The Tits alternative claims that a linear group contains either a solvable subgroup of finite 
index, or a nonabelian free subgroup. 
A nice and important, yet admitting a few-lines elementary proof, result of Platonov \cite{platonov}
states that a linear group which has a nontrivial identity, contains a solvable subgroup of
finite index. Modulo this result, the proof of the Tits alternative reduces to establishing
that a linear group $G$ which does not satisfy a nontrivial identity, contains a nonabelian free
subgroup. One may naively argue as follows:
by Corollary \ref{cor2}, a group, elementarily equivalent to $G$,
contains a nonabelian free subgroup. From this we may infer some first-order properties of 
$G$, for example, that it contains ``a piece of a nonabelian free group of arbitrarily large
length'', like in the proof of Theorem \ref{monster}.
On the other hand, as linearity is a sort of finiteness condition, 
one may hope that these first-order properties may help to construct a nonabelian free 
subgroup in $G$.
If successful, this approach would provide a proof of the Tits alternative drastically different
from all the proofs given so far.

\subsection{Jacobson's problem}

An old open problem due to Jacobson asks whether a Lie $p$-algebra $L$ such that
for every $x\in L$ there is $n(x)\in \mathbb N$ satisfying
\begin{equation}\label{xp}
x^{p^{n(x)}} = x ,
\end{equation}
is abelian?
As one of the first steps, one may wish to prove that such Lie algebras satisfy
a nontrivial identity; or, for example, that there are no simple (or even prime) such algebras. In both of
these cases, by Lemma \ref{zariski} and Corollary \ref{cor}, a free Lie algebra
embeds in an algebra elementarily equivalent to $L$, over some elementary 
extension of the base field. The condition (\ref{xp}) is not the first-order property
(unless all $n(x)$ are bounded, in which case the problem is trivial), but one
may hope to derive from it some first-order consequences which will come into 
contradiction with the existence of a free Lie subalgebra.

\section*{Acknowledgements}

Thanks are due to Shelly Garion, Misha Gavrilovich, Sergei Ivanov, 
Ruvim Lipyanski, Alexander Olshanskii, Mark Sapir, Ivan Shestakov, 
Irina Sviridova, Jen\H{o} Szigeti, and Boris Zilber for helpful and interesting
comments.

A previous version of this manuscript was rejected by 
\textit{Journal of the London Mathematical Society},
but I owe to the anonymous referee of that submission a great deal of improvements
and corrections, both explicit and implicit: 
in particular, he gave me an idea to use the condition of finite subdirect irreducibility,
and to formulate Theorem \ref{ultra} in the language of universal 
algebra. 
The final touches were done during my visit to IH\'ES at February 2012.

This work was partially supported by grants ERMOS7  
(Estonian Science Foundation and Marie Curie Actions) and ETF9038 
(Estonian Science Foundation).

\end{document}